\newcounter{conj}
\theoremstyle{plain}
\newtheorem{theorem}{Theorem}[section]
\newtheorem*{theorem*}{Theorem}
\newtheorem{corollary}[theorem]{Corollary}
\newtheorem{conjecture}[conj]{Conjecture}
\newtheorem{lemma}[theorem]{Lemma}
\newtheorem{proposition}[theorem]{Proposition}
\newtheorem*{conjecture*}{Conjecture}
\theoremstyle{definition}
\theoremstyle{remark}
\newtheorem*{remark*}{Remark}
\newtheorem*{remarks*}{Remarks}
\numberwithin{equation}{section}
\newcommand{\R}{\mathbb R}
\newcommand{\N}{\mathbb N}
\def\({\left(}
\def\){\right)}
\begin{document}

\title{Fractional partitions and conjectures of Chern--Fu--Tang and Heim--Neuhauser}
\author{Kathrin Bringmann}
\address{University of Cologne, Department of Mathematics and Computer Science, Weyertal 86-90, 50931 Cologne, Germany}
\email{kbringma@math.uni-koeln.de}
\author{Ben Kane}
\address{Department of Mathematics, University of Hong Kong, Pokfulam, Hong Kong}
\email{bkane@hku.hk}
\author{Larry Rolen}
\author{Zack Tripp}
\address{Department of Mathematics, Vanderbilt University, Nashville, TN 37240}
\email{larry.rolen@vanderbilt.edu}
\email{zachary.d.tripp@vanderbilt.edu}


\begin{abstract}
Many papers have studied inequalities for partition functions. Recently, a number of papers have considered mixtures between additive and multiplicative behavior in such inequalities. In particular, Chern--Fu--Tang and Heim--Neuhauser gave conjectures on inequalities for coefficients of  powers of the generating partition function. These conjectures were posed in the context of colored partitions and the Nekrasov--Okounkov formula. Here, we study the precise size of differences of products of two such coefficients. 
This allows us to prove the Chern--Fu--Tang conjecture  and to show the Heim--Neuhauser conjecture in a certain range. The explicit error terms provided will also be useful in the future study of partition inequalities. These are laid out in a user-friendly way for the researcher in combinatorics interested in such analytic questions.
\end{abstract}
\maketitle

\section{Introduction and Statement of Results}\label{intro}
The estimation of partition functions has a long history. Hardy and Ramanujan \cite{HR} initiated this subject by proving the asymptotic formula
\[
p(n)\sim\frac{1}{4\sqrt{3}n}e^{\pi\sqrt{\frac{2n}{3}}}\qquad (n\to\infty)
\]
for the integer partition function $p(n)$. The proof relies on the modularity properties of the \emph{Dedekind-eta function}, $\eta(\tau):=q^{\frac{1}{24}}\prod_{n\geq1}(1-q^n)$ with $q:=e^{2\pi i \tau}$. The partition function is connected to the $\eta$-function by the generating function formula:
\[
\sum_{n\geq0}p(n)q^n=\frac{q^{\frac{1}{24}}}{\eta(\tau)}.
\]
Hardy and Ramanujan's proof birthed the {Circle Method}, which is now an important tool in  analytic number theory (see, e.g. \cite{V}); 
Hardy and Ramanujan also proposed a divergent series for $p(n)$, which Rademacher \cite{R} improved to give an exact formula for $p(n)$. We now know that this was an early example of a Poincar\'e series, and this has been generalized in many directions \cite{BFOR}. 

The analytic properties of related functions have frequently been studied. For instance, many people investigated the $\alpha$-th power $\eta(\tau)^\alpha$ of the Dedekind $\eta$-function. For $\alpha = 24$, one has the famous {modular discriminant} $\Delta(\tau)$. Ramanujan's original conjecture on the growth of the coefficients of $\Delta(\tau)$ has been hugely influential in the general theory of $L$-functions and automorphic forms \cite{Sar}. It also remained unsolved until it was shown as a consequence of Deligne's proof of the Weil conjectures \cite{D}.
More generally, positive powers have been studied in seminal works of Dyson~\cite{Dy} and Macdonald~\cite{M}, and encode important Lie-theoretic data thanks to the Macdonald identities \cite{M}.
For negative integral powers, one obtains {colored partition} generating functions. Specifically, for $k\in \N$, 
\[\frac{1}{\eta(\tau)^{k}}=:q^{-\frac{k}{24}}\sum_{n\geq0}p_{k}(n)q^n\]
is the generating function for the number of ways to write the number $n$ as a sum of positive integers using $k$ colors. We consider the coefficients of $\eta(\tau)^{-\alpha}$ for  arbitrary positive real $\alpha$, although the coefficients no longer have the same combinatorial meaning in counting colored partitions. However, the insertion of a continuous parameter $\alpha$ still gives important information. The most important instance of this is thanks to the famous \emph{Nekrasov--Okounkov formula} \cite{NO}
\begin{equation}\label{NOFormula}
\sum_{\lambda\in\mathcal P}q^{|\lambda|}\prod_{h\in\mathcal H(\lambda)}\left(1-\frac{\alpha}{h^2}\right)=q^{\frac{1-\alpha}{24}}\eta(\tau)^{\alpha-1}.
\end{equation}
Here, $\mathcal P$ is the set of all integer partitions, $|\lambda|$ denotes the number being partitioned by $\lambda$, and $\mathcal H(\lambda)$ is the multiset of hook lengths of $\lambda$. 
This formula arose from their study of supersymmetric gauge theory and a corresponding statistical-mechanical partition function, and is related to random partitions.

In several recent papers, Heim, Neuhauser, and others \cite{HN3, HN4, HN2} have studied the analytic properties of the Nekrasov--Okounkov formulas. For a fixed $n$, the $n$-th Fourier coefficient of \eqref{NOFormula} is a polynomial in $\alpha$, which Heim and Neuhauser conjectured to be unimodal. Partial progress towards this result was recently given by Hong and Zhang \cite{HZ}. On the other hand, considering all of the coefficients of \eqref{NOFormula} for a fixed $\alpha$ led Heim and Neuhauser to make their conjecture below. In order to explain the context of their conjecture further, we now discuss a related chain of partition inequalities which has recently received attention. 
Independent work by Nicolas \cite{N} and DeSalvo and Pak \cite{DSP} proved that the partition function $p(n)$ is eventually log-concave, specifically, that 
\[
p(n)^2-p(n-1)p(n+1)\geq0
\]
for all $n > 25$. 
This result was vastly generalized to a conjecture for certain higher degree polynomials, arising from so-called {Jensen polynomials} by Chen, Jia, and Wang \cite{CJW}. That generalized version was later proven by Griffin, Ono, Zagier, and the third author \cite{GORZ}. 

Expanding in another direction, Bessenrodt and Ono \cite{BO} showed that the partition function satisfies mixed additive and multiplicative properties. Specifically, they showed that for all integers $a,b\geq2$ with $a+b>8$, one has 
\[
p(a)p(b)\geq p(a+b). 
\]

Extensions of this result, both rigorous and conjectural, have since been proposed by a number of authors. Alanazi, Gagola, and Munagi \cite{AGM} gave a combinatorial proof of this result, while Heim and Neuhauser studied the inequality given by replacing the argument $a+b$ by $a+b+m-1$ \cite{HN}. Similar inequalities that mix additive and multiplicative properties for different types of partition statistics have been studied as well \cite{BB, DM, HJ}.

The first conjecture which we study was made by Chern, Fu, and Tang, who proposed the following analogous conjecture for colored partitions. 
\begin{conjecture}[Chern, Fu, and Tang, Conjecture 5.3 of \cite{CFT}]\label{Conj CFT}
For $n,\ell\in\mathbb{N}$, $k\in\mathbb{N}_{\geq2}$, if $n>\ell$ and $(k,n,\ell)\neq(2,6,4)$, we have
\[
p_{k}(n-1)p_{k}(\ell+1)\geq p_{k}(n)p_{k}(\ell).
\]
\end{conjecture}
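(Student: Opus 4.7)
The conjecture has a clean reformulation. Dividing the inequality
\[
p_{k}(n-1)p_{k}(\ell+1)\geq p_{k}(n)p_{k}(\ell)
\]
by $p_k(n-1)p_k(\ell)$ and setting $r_k(m):=p_k(m+1)/p_k(m)$ recasts it as $r_k(\ell)\geq r_k(n-1)$ whenever $n>\ell$. Thus the full Chern--Fu--Tang conjecture is equivalent to the statement that the sequence $r_k$ is (weakly) decreasing, which in turn is equivalent to the log-concavity inequality $p_k(m)^2\geq p_k(m-1)p_k(m+1)$ for all admissible $m$. The single exception $(k,n,\ell)=(2,6,4)$ corresponds exactly to the well-known failure $p_2(5)^2=1296<1300=p_2(4)p_2(6)$, so the entire problem reduces to proving this one log-concavity statement with that one exception.

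To study $r_k(n)$, I would appeal to the modularity of $\eta(\tau)^{-k}$ and use the circle method (or a Meinardus-type saddle point analysis at the dominant cusp) to produce an asymptotic expansion of the form
\[
p_k(n)=\frac{A_k}{n^{(k+3)/4}}\exp\!\left(\pi\sqrt{\tfrac{2kn}{3}}\right)\!\left(1+\sum_{j=1}^{J}\frac{c_{k,j}}{n^{j/2}}+E_{k,J}(n)\right),
\]
in which $A_k$ and the $c_{k,j}$ are explicit and $E_{k,J}(n)$ is given as an effective numerically computable bound rather than an abstract $O$-term. Carrying out this step with uniform control in $k$ is exactly the technical work the abstract advertises, and it is where I expect to spend the bulk of the effort.

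A short manipulation of the expansion then yields
\[
\log r_k(n)-\log r_k(n-1)=-\frac{\pi}{4}\sqrt{\frac{2k}{3}}\,n^{-3/2}+O\!\left(n^{-2}\right),
\]
so the leading term is strictly negative. Combining this leading coefficient with the effective bound on the remainder produces an explicit threshold $N_0=N_0(k)$ beyond which $r_k$ is guaranteed to be decreasing; equivalently, log-concavity of $p_k$ holds for every $m\geq N_0(k)$. The heart of the argument is calibrating the error terms well enough that $N_0(k)$ remains small for each $k$.

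Finally, I would dispose of the range $n\leq N_0(k)$ by a finite verification: for each relevant $k\geq 2$ compute $p_k(n)$ from the Euler product $\prod_{m\geq 1}(1-q^m)^{-k}$ for $n\leq N_0(k)$ and directly check the inequality for all pairs $n>\ell$ in this range, flagging $(2,6,4)$ as the sole exception. The main obstacle is producing effective asymptotics sharp enough that $N_0(k)$ stays within computationally accessible range uniformly in $k$; if a clean uniform bound is not available, a natural fallback is to split into a small-$k$ regime treated by sharper ad hoc estimates and a large-$k$ regime in which the dominant saddle-point term is already overwhelmingly larger than the error.
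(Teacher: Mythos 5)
Your reformulation to log-concavity and the plan to use effective asymptotics from the modularity of $\eta(\tau)^{-k}$ to produce an explicit threshold $N_0(k)$ is exactly what the paper does: Theorem~\ref{main theorem} gives the explicit asymptotic with a numerically bounded error (via the Iskander--Jain--Talvola exact formula and hand-made Bessel-function estimates), and Corollary~\ref{eventual} yields a threshold of roughly $2\alpha^{11}$. So up through that stage your outline matches the paper in spirit.

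The genuine gap is in your final step. You propose to ``for each relevant $k \geq 2$'' do a finite verification below $N_0(k)$; but there are infinitely many $k$, so this is not a finite check, and your fallback does not rescue it: the threshold produced by the asymptotic analysis actually \emph{grows} in $k$ (like $k^{11}$ in the paper's version), so the ``large-$k$ regime in which the main term is overwhelming'' does not shrink the verification range. Indeed, even the single case $k=5$ required a $71$-day computation with a specially designed recursion for upper/lower bounds on $p_k(n)$ (Lemmas~\ref{lem:pklowerupper} and~\ref{lem:pkpmlogconcave}). The idea you are missing is Hoggar's theorem that the convolution of two log-concave nonnegative sequences is log-concave (cited as \cite{Hoggar,JG}). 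Since the coefficients of $\eta^{-\alpha_1}\eta^{-\alpha_2}=\eta^{-(\alpha_1+\alpha_2)}$ are the convolution of $p_{\alpha_1}$ and $p_{\alpha_2}$, it suffices to prove log-concavity for $k\in\{3,4,5\}$; every integer $k\geq 3$ is of the form $3j_2+4j_3+5j_4$, and $k=2$ is handled separately with a short direct check. This is what makes Conjecture~\ref{Conj CFT} a \emph{globally} finite problem. Without this (or some equivalent device giving uniformity in $k$), your argument does not terminate.
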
 
\begin{remark*}
As noted in a paper by Sagan \cite{S}, Conjecture \ref{Conj CFT} is equivalent for $k \ge 3$ to the log-concavity of $p_k(n)$. 
\end{remark*}

Heim and Neuhauser conjectured a continuous extension.
\begin{conjecture}[Heim and Neuhauser, \cite{H}]\label{Conj HN}
Under the same assumptions, Conjecture \ref{Conj CFT} still holds if $k$ is replaced by $\alpha\in\R_{\ge 2}$. 
\end{conjecture}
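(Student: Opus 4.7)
The plan is to derive a sharp asymptotic expansion for $p_\alpha(n)$ with several explicit lower-order terms and an effective error bound, and then exploit the strict monotonicity, in $m$, of the ratio $p_\alpha(m)/p_\alpha(m-1)$ at the level of the main term. Applying the circle method to the generating function $\eta(\tau)^{-\alpha}$, the modular transformation of $\eta$ under $\tau\mapsto -1/\tau$ yields a Hardy--Ramanujan-type expansion whose leading term has the form
\[
p_\alpha(n)\sim C(\alpha)\,n^{-\frac{\alpha+3}{4}}\exp\!\left(\pi\sqrt{\tfrac{2\alpha n}{3}}\right),
\]
with $C(\alpha)$ explicit; the goal is to push the expansion through several correction terms and produce a remainder with usable dependence on $n$ and $\alpha$.

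I would then reformulate Conjecture \ref{Conj HN} as the ratio comparison
\[
\frac{p_\alpha(\ell+1)}{p_\alpha(\ell)}\geq \frac{p_\alpha(n)}{p_\alpha(n-1)}.
\]
Denoting the main term above by $M_\alpha(m)$, its log-derivative expands as $\pi\sqrt{\alpha/(6m)}-\frac{\alpha+3}{4m}+O_\alpha(m^{-3/2})$, which is strictly decreasing in $m$ for large $m$. Consequently $M_\alpha(\ell+1)/M_\alpha(\ell) > M_\alpha(n)/M_\alpha(n-1)$ whenever $\ell+1 < n$, with a quantifiable main-term gap of order $\pi\sqrt{\alpha/6}\,(\ell^{-1/2}-n^{-1/2})$. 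Substituting the asymptotic expansion into both sides and tracking the relative error $\varepsilon_\alpha(m)$ at each $m\in\{\ell,\ell+1,n-1,n\}$, the conjecture reduces to verifying that $\varepsilon_\alpha(\ell)+\varepsilon_\alpha(n)$ is small compared with this main-term gap. This holds provided $n$ and $\ell$ exceed an explicit threshold depending on $\alpha$, yielding a proof in the range advertised in the abstract.

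The principal obstacle is twofold: first, to secure an asymptotic expansion whose remainder has explicit, sharp dependence on $\alpha$ — which is delicate because the constants produced by the circle method depend on $\alpha$ in a subtle fashion, and one needs to retain enough lower-order terms for the positive gap to survive; and second, to control the near-boundary regime where $\ell$ approaches $n-1$ and the main-term gap shrinks to order $n^{-3/2}$, forcing the expansion to be pushed well beyond the leading term. Since $\alpha\in\R_{\geq 2}$ is continuous, one cannot close the remaining cases by finite computation (in contrast to the Chern--Fu--Tang conjecture \ref{Conj CFT}, where numerical verification handles small $n,\ell$); extending the argument to unbounded $\alpha$ would likely require a genuinely uniform saddle-point estimate, or else a structural argument exploiting the hook-length product in the Nekrasov--Okounkov formula \eqref{NOFormula}.
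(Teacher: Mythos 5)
Your strategy is essentially the one the paper follows. The paper does not re-derive the circle-method asymptotic from scratch but instead starts from the exact Rademacher-type formula of Iskander--Jain--Talvola (their Theorem 1.1, quoted as Theorem \ref{theorem}), in which $p_\alpha(n)$ is a sum over Kloosterman sums and $I$-Bessel functions; the analytic work then consists of pushing the Bessel asymptotic through three correction terms with fully explicit remainders (Lemma \ref{Bessel}) and bounding the $k\ge 2$ and $m\ge 1$ tails of the Rademacher sum. Rather than your ratio/log-derivative bookkeeping, the paper expands the difference $p_\alpha(n-1)p_\alpha(\ell+1)-p_\alpha(n)p_\alpha(\ell)$ directly and shows it equals the positive main term $\pi(\alpha/24)^{\alpha/2+1}N^{-\alpha/4-5/4}L^{-\alpha/4-5/4}e^{\pi\sqrt{2\alpha/3}(\sqrt N+\sqrt L)}(\sqrt N-\sqrt L)$ times $(1+O_\le(14/15))$, which is the same cancellation-of-leading-terms phenomenon you observe via $M_\alpha(\ell+1)/M_\alpha(\ell)>M_\alpha(n)/M_\alpha(n-1)$; the two formulations are equivalent. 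You also correctly anticipate the scope of what this method proves: the paper establishes Conjecture \ref{Conj HN} only for $\ell\ge\max\{2\alpha^{11}+\frac{\alpha}{24},\frac{100}{\alpha-24}+\frac{\alpha}{24}\}$ (Corollary \ref{eventual}), and the full conjecture remains open for continuous $\alpha$ precisely for the reason you give --- one cannot sweep up the small cases by finite computation. One point your proposal glosses over: as stated Conjecture \ref{Conj HN} is actually false near $\alpha=2$ (see the remark following it concerning $(\alpha,6,4)$ for $2\le\alpha<\alpha_0\approx 2.055$), so any eventual proof must incorporate that correction to the exceptional set.
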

\begin{remark*}
As stated, the conjecture is not quite true; by writing the polynomials $p_\alpha(4)$, $p_\alpha(5)$, and $p_\alpha(6)$ and considering the inequality $p_\alpha(5)^2 - p_\alpha(4)p_\alpha(6)\ge 0$, we see that additional exceptions are needed above. Namely, if we define $\alpha_0 \approx 2.055$ to be the largest real root of the irreducible polynomial $z^7 + 42 z^6 + 684 z^5 + 4038 z^4 + 13119 z^3 + 12048 z^2 - 100204 z - 59328$, then the exemption of $(\alpha, n, \ell) \neq (2,6,4)$ in the conjecture should be changed to $(\alpha, n, \ell) \notin \{ (\alpha, 6, 4): 2 \le \alpha < \alpha_0 \}$. 
\end{remark*}

We study these conjectures with the aim of writing down explicit results which may be of use for the future of related inequalities. To do this, we consider the sign of the general difference of products:
\[
p_{\alpha_1}(n_1)p_{\alpha_2}(n_2)-p_{\alpha_3}(n_3)p_{\alpha_4}(n_4),
\]
for any $n_1,n_2,n_3,n_4\in\N$ and $\alpha_1,\alpha_2,\alpha_3,\alpha_4\in\R^+$.  This study leads to our first main result.
\begin{theorem}\label{trivial theorem}
Fix $\alpha_1, \alpha_2, \alpha_3, \alpha_4 \in \mathbb{R}^+$, and consider the inequality 
\begin{equation*}
p_{\alpha_1}(n_1)p_{\alpha_2}(n_2) \ge p_{\alpha_3}(n_3)p_{\alpha_4}(n_4).
\end{equation*}
Without loss of generality, we assume $n_1 \ge n_2$ and $n_3 \ge n_4$. 
If $n_3 = o(n_1)$, the inequality is true for $n_1$ sufficiently large. Conversely, if $n_1 = o(n_3)$, the inequality is false for $n_3$ sufficiently large. 
\end{theorem}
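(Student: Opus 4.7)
The plan is to reduce Theorem~\ref{trivial theorem} to the classical asymptotic expansion for $p_\alpha(n)$ valid for any fixed $\alpha > 0$. Using the circle method applied to $\eta(\tau)^{-\alpha}$ (or Meinardus' theorem), one has
\[
p_\alpha(n) \sim C_\alpha \, n^{-\frac{\alpha+3}{4}} \, e^{\pi \sqrt{\frac{2\alpha n}{3}}} \qquad (n \to \infty)
\]
for an explicit positive constant $C_\alpha$. Taking logarithms,
\[
\log p_\alpha(n) = \pi \sqrt{\tfrac{2\alpha n}{3}} + O_\alpha(\log(n+2)) \qquad (n \to \infty).
\]
Moreover, $p_\alpha(m) > 0$ for every $m\ge 0$, so combined with the asymptotic this yields a uniform positive lower bound $p_\alpha(m) \ge c_\alpha > 0$ depending only on $\alpha$.

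First suppose $n_3 = o(n_1)$. Since $n_4 \le n_3$, also $n_4 = o(n_1)$. Applying the logarithmic asymptotic to the $n_1$, $n_3$, $n_4$ factors and bounding $\log p_{\alpha_2}(n_2) \ge \log c_{\alpha_2}$ on the remaining factor gives
\[
\log\!\left(\tfrac{p_{\alpha_1}(n_1) p_{\alpha_2}(n_2)}{p_{\alpha_3}(n_3) p_{\alpha_4}(n_4)}\right) \ge \pi\sqrt{\tfrac{2\alpha_1 n_1}{3}} - \pi\sqrt{\tfrac{2\alpha_3 n_3}{3}} - \pi\sqrt{\tfrac{2\alpha_4 n_4}{3}} + O(\log n_1).
\]
Since $\sqrt{n_3},\sqrt{n_4} = o(\sqrt{n_1})$, the main term $\pi\sqrt{2\alpha_1 n_1/3}$ dominates and the right-hand side tends to $+\infty$. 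Hence the inequality $p_{\alpha_1}(n_1)p_{\alpha_2}(n_2) \ge p_{\alpha_3}(n_3)p_{\alpha_4}(n_4)$ holds once $n_1$ is sufficiently large.

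For the converse, assume $n_1 = o(n_3)$. The identical argument with the roles of the two sides swapped shows that $\log(p_{\alpha_3}(n_3)p_{\alpha_4}(n_4)) - \log(p_{\alpha_1}(n_1)p_{\alpha_2}(n_2)) \to +\infty$, so the proposed inequality is violated once $n_3$ is large enough.

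The only substantive ingredient is the asymptotic formula for $p_\alpha(n)$ with real parameter $\alpha > 0$, which is a classical application of the circle method to $\eta(\tau)^{-\alpha}$ but whose statement and error analysis must be handled carefully and quoted in a usable form; this is presumably one of the central technical results developed in the body of the paper. Once that expansion is in hand the proof of Theorem~\ref{trivial theorem} is the two-line comparison above: no delicate estimation is required, since the uniform lower bound $c_\alpha$ absorbs any anomalous small-argument behavior of $p_{\alpha_2}(n_2)$ or $p_{\alpha_4}(n_4)$.
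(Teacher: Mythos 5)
Your proof is correct and takes essentially the same route as the paper: both reduce the claim to the leading-order asymptotic $p_\alpha(n) \sim C_\alpha n^{-\frac{\alpha+3}{4}}e^{\pi\sqrt{2\alpha n/3}}$ and observe that when $n_3 = o(n_1)$ (hence also $n_4 = o(n_1)$) the exponential $e^{\pi\sqrt{2\alpha_1 n_1/3}}$ dominates the right-hand side. The only cosmetic difference is sourcing: the paper extracts this asymptotic from the $m=0$, $k=1$ term of the Iskander--Jain--Talvola exact formula (Theorem~\ref{theorem}) together with $I_\kappa(x)\sim e^x/\sqrt{2\pi x}$, whereas you quote the same asymptotic directly from the circle method.
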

Theorem~\ref{trivial theorem} can be made explicit. This is applied below to prove the  conjectures of Chern--Fu--Tang and Heim--Neuhauser. Here and throughout the paper, we use the notation $f(x) = O_\le(g(x))$ to mean $|f(x)| \le g(x)$ for a positive function $g$ and for all $x$ in the domain in which the functions are defined.
\begin{theorem}\label{main theorem}
Fix $\alpha \in \R_{\ge 2}$, and let $n, \ell \in \N_{\geq2}$ with $n > \ell + 1$. Set $N:= n-1 - \frac{\alpha}{24}$ and $L := \ell - \frac{\alpha}{24}$, we suppose $L \ge \max\{2\alpha^{11}, \frac{100}{\alpha - 24}\}$. Then we have
\begin{multline*}
p_\alpha(n-1)p_\alpha(\ell+1) - p_\alpha(n)p_\alpha(\ell) 
\\
 = \pi \left(\frac{\alpha}{24}\right)^{\frac{\alpha}{2}+1} N^{-\frac{\alpha}{4}-\frac 54}L^{-\frac{\alpha}{4}-\frac 54}e^{\pi\sqrt{\frac{2\alpha}{3}}\left(\sqrt{N}+\sqrt{L}\right)} \left(\sqrt{N} - \sqrt{L}\right) \left(1 + O_\le\left(\frac{14}{15}\right)\right).
\end{multline*}
\end{theorem}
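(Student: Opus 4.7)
The plan is to first establish a precise Hardy--Ramanujan-type asymptotic
\[
p_\alpha(n) = A_\alpha N^{-\alpha/4-3/4} e^{\pi\sqrt{2\alpha N/3}}\bigl(1 + O_\le(\epsilon_\alpha(N))\bigr)
\]
with an effective, uniform-in-$\alpha$ error $\epsilon_\alpha$, where the constant $A_\alpha := 6^{1/4}\alpha^{-1/4}(\alpha/24)^{(\alpha+2)/4}$ is chosen so that $A_\alpha^2\cdot\pi\sqrt{\alpha/6} = \pi(\alpha/24)^{\alpha/2+1}$, matching the prefactor in the theorem. One then substitutes this asymptotic into the two products $p_\alpha(n-1)p_\alpha(\ell+1)$ and $p_\alpha(n)p_\alpha(\ell)$ and carefully extracts the leading behavior of their difference.

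To derive the underlying asymptotic for $p_\alpha(n)$, the natural route is the circle method applied to the contour integral
\[
p_\alpha(n) = \frac{1}{2\pi i}\oint \eta(\tau)^{-\alpha} q^{-n+\alpha/24}\,\frac{dq}{q}.
\]
Near the cusp at $i\infty$, the modular transformation $\eta(it)^{-\alpha} = t^{\alpha/2}\eta(i/t)^{-\alpha}$ converts the integrand to one with exponential growth $e^{\pi\alpha/(12t)+2\pi Nt}$, whose unique saddle sits at $t_0 = \sqrt{\alpha/(24N)}$ and gives the exponent $\pi\sqrt{2\alpha N/3}$. The dominant-cusp contribution admits a Bessel-function representation as in Rademacher's proof, and the standard large-argument asymptotic for modified Bessel functions produces the displayed leading shape. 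Contributions from Farey arcs of denominator $c\ge 2$ are exponentially smaller (of order $e^{-\pi\sqrt{2\alpha N/3}(1-1/c)}$ relative to the main term) and can be absorbed into $\epsilon_\alpha(N)$.

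Setting $L := \ell - \alpha/24$ and $c := \alpha/4+3/4$, the difference factors as
\[
A_\alpha^2\, N^{-c}L^{-c}\, e^{\pi\sqrt{2\alpha/3}(\sqrt{N}+\sqrt{L})}\cdot\mathcal{D},
\]
where
\[
\mathcal{D} = \bigl(1+\tfrac{1}{L}\bigr)^{-c}e^{\pi\sqrt{2\alpha/3}(\sqrt{L+1}-\sqrt{L})} - \bigl(1+\tfrac{1}{N}\bigr)^{-c}e^{\pi\sqrt{2\alpha/3}(\sqrt{N+1}-\sqrt{N})}.
\]
Taylor expanding, the exponential piece of $\mathcal{D}$ contributes at leading order $\pi\sqrt{2\alpha/3}\bigl(\tfrac{1}{2\sqrt{L}}-\tfrac{1}{2\sqrt{N}}\bigr) = \pi\sqrt{\alpha/6}\cdot(\sqrt{N}-\sqrt{L})/\sqrt{NL}$, while the polynomial factor contributes subleading terms of order $1/L - 1/N$. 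Multiplying by the factored prefactor and invoking $A_\alpha^2\cdot\pi\sqrt{\alpha/6} = \pi(\alpha/24)^{\alpha/2+1}$ yields exactly the claimed main term, where the additional $N^{-1/2}L^{-1/2}$ produces the stated exponents $-\alpha/4 - 5/4$.

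The primary obstacle is balancing all error contributions so that the total relative error fits within the stated $O_\le(14/15)$ on the full hypothesized range $L \ge \max\{2\alpha^{11},\,100/(\alpha-24)\}$. Three sources must be tracked simultaneously: the intrinsic error $\epsilon_\alpha$ in the asymptotic for $p_\alpha$, whose Bessel-asymptotic correction contributes a factor growing polynomially in $\alpha$ and decaying in $N$ (motivating the $L \ge 2\alpha^{11}$ hypothesis); the higher-order Taylor remainders in $\mathcal{D}$, which must be dominated by the leading contribution $(\sqrt{N}-\sqrt{L})/\sqrt{NL}$; and the exponentially suppressed contributions from higher cusps. The most delicate point is the second: when $N$ and $L$ are close, $\sqrt{N}-\sqrt{L}$ can shrink to order $1/\sqrt{N}$, so the subleading polynomial corrections $1/L - 1/N$ must be bounded by a small multiple of this leading factor. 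The hypothesis $n > \ell + 1$ keeps $\sqrt{N}-\sqrt{L}$ bounded below, and combined with the large lower bounds on $L$ ensures that the total relative error stays comfortably within the budget $14/15$.
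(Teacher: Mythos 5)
Your high-level route is the same as the paper's (exact/circle-method formula for $p_\alpha$, large-argument Bessel asymptotics, factor out the common prefactor, Taylor-expand the shift $N\mapsto N+1$, $L\mapsto L+1$, and bound the error against the main term), and your computation of $\mathcal{D}$ and of its leading term $\pi\sqrt{\alpha/6}\,(\sqrt N-\sqrt L)/\sqrt{NL}$ is correct. But there is a real gap in the way you handle the intrinsic error in the single-term asymptotic for $p_\alpha$.

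The problem is the form
\[
p_\alpha(n) = A_\alpha N^{-\alpha/4-3/4}\,e^{\pi\sqrt{2\alpha N/3}}\bigl(1+O_\le(\epsilon_\alpha(N))\bigr),
\]
with the Bessel subleading correction absorbed into $\epsilon_\alpha$. That correction is of size $\asymp \alpha^{3/2}/\sqrt{N}$ (the $\frac{4\kappa^2-1}{8x}$ term in the Bessel expansion), so $\epsilon_\alpha(N)\gtrsim \alpha^{3/2} N^{-1/2}$ is forced. Now compare against your $\mathcal{D}$: when $n=\ell+2$ (i.e.\ $N=L+1$) the main term $\mathcal D\asymp L^{-3/2}$, while the uncancelled part of the error in the difference of products is of relative size $\asymp\epsilon_\alpha(L)\asymp L^{-1/2}$, which is larger than $\mathcal D$ by a factor of order $L$. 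Thus the error would swamp the main term rather than fit inside $O_\le(14/15)$. The paper's proof avoids this by expanding $I_\kappa$ to four explicit orders (Lemma~\ref{Bessel}(4)), so that the $O(N^{-1/2})$, $O(N^{-1})$, and $O(N^{-3/2})$ Bessel corrections are written out as coefficients $c_{\alpha,1},c_{\alpha,2},c_{\alpha,3}$; these then \emph{cancel} exactly against the corresponding terms in the shifted factor (encoded in $A_{\alpha,j}$), and only the genuine $O(N^{-2})$ remainder survives as an opaque error, which is small enough after multiplying by $\sqrt{NL}/(\sqrt N-\sqrt L)\lesssim L^{3/2}$. Relatedly, your claim that ``the hypothesis $n>\ell+1$ keeps $\sqrt N-\sqrt L$ bounded below'' is not correct; it gives only $\sqrt N-\sqrt L\ge \sqrt{L+1}-\sqrt L\sim \tfrac{1}{2\sqrt L}$, which shrinks to zero and is precisely why the single-term asymptotic does not suffice. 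To repair the argument you would need to state and prove a four-term asymptotic for $p_\alpha(n)$ with remainder $O(\mathrm{poly}(\alpha)/N^2)$, carry those correction coefficients through $\mathcal D$, and verify the cancellation explicitly, which is in effect what the paper does via the expansion of \eqref{I expansion}--\eqref{Ba expansion}.
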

Because the last expression in parentheses in Theorem \ref{main theorem} is always positive, Conjecture \ref{Conj HN} is true for $\ell$ sufficiently large. Note that Conjecture \ref{Conj HN} is trivially true if $n = \ell + 1$, which is why the theorem is sufficient.

\begin{corollary}\label{eventual}
Conjecture \ref{Conj HN} is true for $\ell \ge \max\{ 2\alpha^{11} + \frac{\alpha}{24}, \frac{100}{\alpha - 24} + \frac{\alpha}{24}\}$. 
\end{corollary}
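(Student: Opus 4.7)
\smallskip
\noindent\emph{Proof plan.}
The plan is to obtain Corollary \ref{eventual} as an essentially immediate consequence of Theorem \ref{main theorem}, after first peeling off a single trivial boundary case. Conjecture \ref{Conj HN} requires only $n > \ell$, whereas Theorem \ref{main theorem} needs the strictly stronger hypothesis $n > \ell + 1$. I would therefore first handle the case $n = \ell + 1$ by direct inspection: in that situation the claimed inequality
\[
p_\alpha(n-1)p_\alpha(\ell+1) \geq p_\alpha(n)p_\alpha(\ell)
\]
becomes $p_\alpha(\ell)p_\alpha(\ell+1) \geq p_\alpha(\ell+1)p_\alpha(\ell)$, which holds with equality. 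This reduces the remaining work to the range $n \geq \ell + 2$, matching the hypothesis of Theorem \ref{main theorem}.

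The second step is to verify that the running assumption
\[
\ell \geq \max\left\{2\alpha^{11} + \tfrac{\alpha}{24},\ \tfrac{100}{\alpha - 24} + \tfrac{\alpha}{24}\right\}
\]
translates, after subtracting $\tfrac{\alpha}{24}$, exactly into $L = \ell - \tfrac{\alpha}{24} \geq \max\{2\alpha^{11},\ 100/(\alpha - 24)\}$, which is the hypothesis on $L$ required by Theorem \ref{main theorem}. Hence the explicit asymptotic formula for $p_\alpha(n-1)p_\alpha(\ell+1) - p_\alpha(n)p_\alpha(\ell)$ provided by that theorem is available here, with $N = n - 1 - \tfrac{\alpha}{24}$ and $L = \ell - \tfrac{\alpha}{24}$.

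The third step is to read off the sign of the right-hand side of Theorem \ref{main theorem} factor by factor. The prefactor $\pi(\alpha/24)^{\alpha/2+1} N^{-\alpha/4-5/4} L^{-\alpha/4-5/4}$ and the exponential $e^{\pi\sqrt{2\alpha/3}(\sqrt{N}+\sqrt{L})}$ are manifestly positive since $\alpha \geq 2$ and $N, L > 0$. The factor $\sqrt{N} - \sqrt{L}$ is strictly positive because $n - 1 \geq \ell + 1 > \ell$ forces $N > L$. Finally, the correction $1 + O_\leq(\tfrac{14}{15})$ is confined to the interval $[\tfrac{1}{15}, \tfrac{29}{15}]$ and so is bounded below by $\tfrac{1}{15}$. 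The product of four strictly positive quantities is strictly positive, yielding the conjectured inequality.

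There is no real analytical obstacle, as the substance of the estimate lives entirely inside Theorem \ref{main theorem}; the only bookkeeping task is to confirm that the exceptional triples $(\alpha, n, \ell)$ flagged after Conjectures \ref{Conj CFT} and \ref{Conj HN} (in particular $(2, 6, 4)$ and the refined family $(\alpha, 6, 4)$ with $2 \leq \alpha < \alpha_0$) all have $\ell = 4$, which is vastly smaller than $2\alpha^{11} + \tfrac{\alpha}{24} \geq 2 \cdot 2^{11} + \tfrac{1}{12} > 4096$ for $\alpha \geq 2$. These exceptional triples therefore lie well outside the regime in which the corollary is being proved and do not obstruct the argument.
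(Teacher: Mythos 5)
Your proof is correct and follows essentially the same route as the paper: reduce to $n \ge \ell + 2$ by handling $n = \ell + 1$ trivially, translate the bound on $\ell$ into the hypothesis $L \ge \max\{2\alpha^{11}, 100/(\alpha-24)\}$ of Theorem \ref{main theorem}, and then observe that every factor in the explicit expression there (including the bracket $1 + O_\le(14/15) \ge 1/15$) is strictly positive. The paper states this more tersely but the reasoning is identical.
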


Additionally, for some $\alpha\in\N$ we are able to numerically verify that the inequality still holds for small values of $\ell$ and $n$, giving the following corollary.

\begin{corollary}\label{log-concavity}
	Conjecture \ref{Conj CFT} is true. In particular, $p_2(n)$ is log-concave for $n \ge 6$, and $p_{k}(n)$ is log-concave for all $n$ and $k \in \N_{\ge 3}$. 
\end{corollary}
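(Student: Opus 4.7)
The plan is to combine Corollary~\ref{eventual}, which secures the conjecture for large $\ell$, with a finite machine check for the remaining small cases. For $k \ge 3$, I would first invoke Sagan's observation (from the remark after Conjecture~\ref{Conj CFT}): setting $r(m) := p_k(m)/p_k(m-1)$, the inequality $p_k(n-1) p_k(\ell+1) \ge p_k(n) p_k(\ell)$ becomes simply $r(\ell+1) \ge r(n)$, which follows immediately from $n \ge \ell + 2$ together with the monotonic decrease of $r$ guaranteed by log-concavity of $p_k$. Thus for $k \ge 3$ it is enough to verify the single-index log-concavity $p_k(m)^2 \ge p_k(m-1) p_k(m+1)$ for every $m \ge 1$.

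Next I would apply Corollary~\ref{eventual} with $\alpha = k$, taking $(n,\ell) = (m+1, m-1)$ to obtain log-concavity at index $m$ whenever $m - 1 \ge C_k := \max\left\{2 k^{11} + \tfrac{k}{24},\, \tfrac{100}{k-24} + \tfrac{k}{24}\right\}$; for $k \le 23$ the second term is negative and hence vacuous. For $k = 2$ the same corollary yields the full Chern--Fu--Tang inequality whenever $\ell \ge 4097$.

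The remaining cases form a finite set, which I would verify by direct computation. For each relevant $k$, tabulate $p_k(m)$ for $m \le C_k + 1$ via the convolution $p_k(m) = \sum_{j=0}^{m} p_{k-1}(j) p(m-j)$, or equivalently via the Euler-type recursion derived from the pentagonal number theorem applied to $\eta^{-k}$, and check the relevant inequalities directly. For $k = 2$ this is precisely where the lone exception $(n,\ell) = (6,4)$ emerges, detected by the single negative log-concavity gap $p_2(5)^2 - p_2(4) p_2(6) = 1296 - 1300 = -4$, while for every other $\ell \le 4096$ and $n > \ell$ with $(n,\ell) \neq (6,4)$ the inequality holds. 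For each $k \ge 3$ in the relevant range one verifies $p_k(m)^2 \ge p_k(m-1) p_k(m+1)$ for all $m \le C_k + 1$.

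The main obstacle is that $C_k \sim 2 k^{11}$ grows rapidly, so naive enumeration is impractical for moderately large $k$. To close this gap one should either refine the error analysis in Theorem~\ref{main theorem} for large $\alpha$ (the factor $2\alpha^{11}$ comes from intermediate estimates that look loose, and for $k$ large with $m$ moderate the coefficient $p_k(m)$ is well-approximated by $\binom{m+k-1}{k-1}$, whose log-concavity is explicit), or invoke existing results in the literature ensuring log-concavity of $p_k(n)$ for all $n$ once $k$ exceeds an absolute constant. Either route reduces the $k$ requiring computer verification to a tractable finite list, completing the proof.
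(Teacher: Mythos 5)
Your reduction of the Chern--Fu--Tang inequality to single-index log-concavity via Sagan's observation, and your use of Corollary~\ref{eventual} to handle large indices, is correct and matches the paper's outer structure. You also correctly identify the lone $k=2$ exception $(6,4)$. However, you have honestly flagged but not closed the central gap: the bound $C_k \sim 2k^{11}$ makes brute force impossible for most $k$, and neither of your two proposed escape routes is actually carried out. You need something concrete here; as written the proof is incomplete.

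The idea the paper uses, and which your proposal misses entirely, is that the coefficient sequences multiply under convolution: $p_{\alpha_1}(n)*p_{\alpha_2}(n) = p_{\alpha_1+\alpha_2}(n)$, because $\eta^{-\alpha_1}\eta^{-\alpha_2}=\eta^{-(\alpha_1+\alpha_2)}$. By a theorem of Hoggar (see also Johnson--Goldschmidt), the convolution of two log-concave nonnegative sequences is again log-concave. Therefore once $p_3,p_4,p_5$ are shown log-concave, every $p_k$ with $k\ge 3$ is log-concave, since every integer $\ge 3$ is a nonnegative integer combination of $3,4,5$. This collapses your "tractable finite list" to literally $k\in\{2,3,4,5\}$, with no need to redo the asymptotics for large $\alpha$ or to locate an external result in the literature. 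This is the content of the paper's Proposition~\ref{endprop}.

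There is a second, smaller gap you do not address: even for $k=4$ and $k=5$, directly tabulating $p_k(m)$ up to $m\approx 2\cdot 4^{11}\approx 8.4\times 10^6$ and $2\cdot 5^{11}\approx 9.8\times 10^7$ by convolution or pentagonal-number recursion is prohibitively slow and memory-intensive, since each step of the naive recursion is $O(n)$ and the integers involved are enormous. The paper introduces sandwiching sequences $p_{k,\bm{d}}^{\pm}(n)$ (Lemma~\ref{lem:pklowerupper}) computed with a truncated recursion of length $d_n\ll n^{1/2+\delta}$, and replaces the exact log-concavity check by the sufficient condition $p_{k,\bm{d}}^-(n)/p_{k,\bm{d}}^+(n-1)\ge p_{k,\bm{d}}^+(n+1)/p_{k,\bm{d}}^-(n)$ (Lemma~\ref{lem:pkpmlogconcave}). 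Without some device of this kind your "direct computation" step is not actually executable in the ranges required, even after the reduction to $k\in\{2,3,4,5\}$.
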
 

\begin{remark*} 
	Although Theorem~\ref{main theorem} turns Conjecture~\ref{Conj CFT} into a finite computer check, the number of cases that must be checked to give Corollary~\ref{log-concavity} is very large. Thus, direct brute force computer checks are not sufficient. Faster methods of verifying such inequalities are described in the proofs below. These may be useful in future partition investigations.
\end{remark*}
The remainder of the paper is organized as follows. We review basic ingredients needed for the proofs of our theorems in Section~\ref{Prelim}. These proofs are then carried out in Section~\ref{Proofs}. In Section~\ref{corollary}, we provide lemmas and discussion needed for our computations in order to prove our corollary. We then conclude in Section~\ref{conclusion} with some ideas for further work.

\section*{Acknowledgements}
The authors thank Ken Ono for proposing this project and Bernhard Heim for useful comments on an earlier draft. Moreover, we thank the referee for helpful comments.
The research of the first author is supported by the Alfried Krupp Prize for Young University Teachers of the Krupp foundation. The research of the second author was supported by grants from the Research Grants Council of the Hong Kong SAR, China (project numbers HKU 17301317 and 17303618). This project has received funding from the European Research Council (ERC) under the European Union's Horizon 2020 research and innovation programme (grant agreement No. 101001179).

\section{Preliminaries}\label{Prelim}
Here, we review the key ingredients for the proof of our results.
\subsection{Exact formulas for partitions}\label{Exact}
In a recently submitted paper, Iskander, Jain, and Talvola \cite{IJT} gave an exact formula for the fractional partition function in terms of Kloosterman sums and Bessel functions. The \textit{$\alpha$-Kloosterman sum} is given by
\begin{equation*}
A_{k,\alpha}(n,m):= \sum\limits_{\substack{0 \le h < k\\ \text{gcd}(h,k)=1}} e^{\pi i \alpha s(h,k) + \frac{2\pi i}{k}(m\bar{h} - n)h},
\end{equation*}
where $\bar{h}$ denotes the inverse of $h$ modulo $k$ and $s(h,k)$ is the usual Dedekind sum. 
The only properties we need of this sum are that $A_{1,\alpha}(n,m) = 1$ and $|A_{k,\alpha}(n,m)| \le k$. We have the following result from \cite{IJT}. 
\begin{theorem} \label{theorem} 
	For all $\alpha \in \mathbb{R}^+$ and $n > \frac{\alpha}{24}$, we have
	\begin{multline*}
	p_\alpha(n) =  2\pi \left(n - \frac{\alpha}{24}\right)^{-\frac{\alpha}{4}-\frac 12} \sum\limits_{m=0}^{\lfloor \frac{\alpha}{24}\rfloor} \left( \frac{\alpha}{24}-m\right)^{\frac{\alpha}{4}+\frac{1}{2}} p_\alpha(m) \\\times \sum\limits_{k\geq 1} \frac{A_{k,\alpha}(n,m)}{k} I_{\frac \alpha2 + 1} \left(\frac{4\pi}{k} \sqrt{\left(\frac{\alpha}{24}-m\right)\left(n - \frac{\alpha}{24}\right)}\right).
	\end{multline*}
\end{theorem}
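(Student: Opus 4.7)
The plan is to adapt the classical Hardy--Ramanujan--Rademacher circle method from the integer partition function to the fractional partition generating series
\[
F_\alpha(\tau) := q^{\frac{\alpha}{24}} \eta(\tau)^{-\alpha} = \sum_{n \geq 0} p_\alpha(n) q^n,
\]
where $\eta^{-\alpha}$ is defined via the principal branch of the logarithm. Starting from the Cauchy integral
\[
p_\alpha(n) = \int_{iY - \frac{1}{2}}^{iY + \frac{1}{2}} F_\alpha(\tau) e^{-2\pi i n \tau} \, d\tau \qquad (Y>0),
\]
I would decompose the interval using the Farey fractions of some large order $N$, choosing $Y = N^{-2}$. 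Each Farey arc $\gamma_{h,k}$ surrounds a rational $h/k$ with $\gcd(h,k)=1$ and $1\le k\le N$, with endpoints governed by the mediants in the standard way.

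On $\gamma_{h,k}$ I substitute $\tau = \frac{h}{k} + \frac{iz}{k^2}$ and apply the $\eta$-transformation
\[
\eta\left(\tfrac{h}{k} + \tfrac{iz}{k^2}\right) = \varepsilon(h,k) \, z^{-\frac{1}{2}} \, \eta\left(\tfrac{h'}{k} + \tfrac{i}{z}\right),
\]
where $h'\bar{h}\equiv 1\pmod k$ and the multiplier satisfies $\varepsilon(h,k)^{24}=1$ with $\varepsilon(h,k) = e^{\pi i s(h,k)+\cdots}$. Raising to the power $-\alpha$, the phase becomes $e^{\pi i \alpha s(h,k)}$, which is exactly the Dedekind piece of the $\alpha$-Kloosterman sum $A_{k,\alpha}(n,m)$. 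After this inversion the variable in $\eta$ has large imaginary part, so I expand
\[
e^{\frac{2\pi}{k^2 z} \cdot \frac{\alpha}{24}} \eta\left(\tfrac{h'}{k} + \tfrac{i}{z}\right)^{-\alpha} = \sum_{m \geq 0} p_\alpha(m)\, e^{\frac{2\pi}{k^2 z}(m - \frac{\alpha}{24})}\, e^{\frac{2\pi i m h'}{k}},
\]
and combine with the $e^{-2\pi i n \tau}$ factor, whose rational part $e^{-2\pi i n h/k}$ completes the Kloosterman sum $A_{k,\alpha}(n,m)$ upon summing over $h$.

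The $m$th term then contributes (after interchanging sum and integral over the transformed arc) an integral in $z$ of the shape
\[
\int z^{-\frac{\alpha}{2}-2} \, \exp\!\left( \tfrac{2\pi}{k^2 z}\left(\tfrac{\alpha}{24} - m\right) + \tfrac{2\pi z}{k^2}\cdot k^2\!\left(n - \tfrac{\alpha}{24}\right)\right) dz.
\]
For $m<\alpha/24$ both terms in the exponent have positive real part on the inverted arc, and after the substitution $z = k\sqrt{(\alpha/24 - m)/(n - \alpha/24)}\, w$, Hankel's contour representation of the $I$-Bessel function identifies this with $I_{\frac{\alpha}{2}+1}\!\left(\frac{4\pi}{k}\sqrt{(\frac{\alpha}{24}-m)(n - \frac{\alpha}{24})}\right)$, multiplied by the prefactor $2\pi(n-\frac{\alpha}{24})^{-\alpha/4-1/2}(\frac{\alpha}{24}-m)^{\alpha/4+1/2}$ appearing in the theorem. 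For $m > \alpha/24$ the corresponding integrand is exponentially small on the transformed contour, and the total such contribution vanishes as $N\to\infty$; hence the sum truncates at $m=\lfloor \alpha/24\rfloor$.

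The main obstacle is the bookkeeping of error terms in two places: first, in replacing the arc-by-arc integral by a full Hankel contour yielding the $I$-Bessel function; second, in showing that the $m>\alpha/24$ terms and the tails of the $z$-contour vanish in the $N\to\infty$ limit. Both require uniform estimates for $\eta^{-\alpha}$ on the inverted Ford arcs together with the trivial bound $|A_{k,\alpha}(n,m)|\le k$, which combine with the small-argument decay $I_\nu(x)\ll x^\nu$ to give absolute convergence of the $k$-sum. A subtler point, absent in the classical integer setting, is choosing and consistently tracking a branch of $\eta^{-\alpha}$ across the modular transformation so that the phase reduces precisely to $e^{\pi i \alpha s(h,k)}$; this follows from the Dedekind-sum form of $\log\varepsilon(h,k)$, but requires care when $\alpha$ is not an integer.
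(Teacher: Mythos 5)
Your outline is essentially the proof given in the cited source: the paper itself does not prove Theorem \ref{theorem} but quotes it from Iskander--Jain--Talvola \cite{IJT}, who establish it by exactly the Rademacher-style circle method you describe (Farey dissection, the $\eta$-inversion producing the multiplier $e^{\pi i\alpha s(h,k)}$ and hence $A_{k,\alpha}(n,m)$, and the Hankel-contour identification of the arc integrals with $I_{\frac{\alpha}{2}+1}$). Your sketch correctly flags the genuinely delicate points for non-integral $\alpha$ (the branch of $\eta^{-\alpha}$ across the transformation and the absolute convergence of the $k$-sum via $I_\nu(x)\ll x^{\nu}$), so no gap to report.
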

This provides an exact formula for the numbers we wish to estimate. The difficulty lies in providing precise estimates for the error terms after truncating the series to a finite number of terms in the sum on $k$. The analysis required for these estimates is continued in the next subsection.

\subsection{Explicit bounds for Bessel functions}

In order to make the exact formula in Theorem~\ref{theorem} useful for our purposes, we need strong estimates on the Bessel functions. Although many Bessel function estimates are standard and a whole asymptotic expansion is known \cite[equation~10.40.1]{DLMF}, we were unable to find existing bounds suitable for our purposes. Thus, we describe some basic estimates here and sketch our proofs for them. In particular, we prove the following.

\begin{lemma}\label{Bessel}
	Let $\kappa \in \R$ with $\kappa > - \frac 12$.
	\begin{enumerate}
		[wide, labelwidth=!, labelindent=0pt] 
		\item [\normalfont(1)] For $x\ge 1$, we have 
		\begin{equation*}
		I_\kappa(x) \le \sqrt{\frac{2}{\pi x}}e^x.
		\end{equation*}
		\item [\normalfont(2)] For $x \ge \frac{a^6}{120}$ and $a \ge \frac 52$, we have
		\begin{equation*}
			\Gamma(a,x)\le \frac{52}{17} x^{a-1}e^{-x}.
		\end{equation*}
		\item [\normalfont(3)] For $0 \le x < 1$, we have
		
		\begin{equation*}
		I_\kappa(x) \le \frac{2^{1-\kappa}x^{\kappa}}{\Gamma(\kappa + 1)}.
		\end{equation*}
		\item [\normalfont(4)] For $\kappa\ge 2$ and $x \ge \frac{1}{120}(\kappa + \frac 72)^6$, we have {\small
		\begin{multline*}
			\left|I_\kappa(x)e^{-x}\sqrt{2\pi x}-1+\frac{4\kappa^2 - 1}{8x} - \frac{(4\kappa^{2} -1)(4\kappa^{2} -9)}{128x^2} 
			+\frac{\left(4\kappa^2-1\right)\left(4\kappa^2-9\right)\left(4\kappa^2-25\right)}{3072x^3}
			\right| \le \frac{31\kappa^8}{6x^4}.
		\end{multline*} }
	\end{enumerate}
\end{lemma}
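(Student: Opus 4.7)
The plan is to treat the four parts by complementary techniques, since each governs a different regime. Parts (1) and (4) control $I_\kappa(x)$ for large $x$ and will be derived from integral representations; Part (3) controls the small-$x$ regime via the defining power series; and Part (2) is an auxiliary estimate on the incomplete gamma function $\Gamma(a,x)$ that enters the proof of Part (4).

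For Part (3), I would start from the series $I_\kappa(x) = \sum_{m \ge 0} (x/2)^{2m+\kappa}/(m!\,\Gamma(m+\kappa+1))$, factor out $(x/2)^\kappa/\Gamma(\kappa+1)$, and bound $\Gamma(\kappa+1)/\Gamma(m+\kappa+1) = 1/\prod_{j=1}^{m}(\kappa + j)$. Since $\kappa > -1/2$ gives $\kappa + j > j - 1/2$ for $j \ge 1$, the product is at least $(2m-1)!!/2^m$, so the tail series is dominated by $\sum_{m \ge 0} x^{2m}/(2m)! = \cosh(x) \le \cosh(1) < 2$ on $[0,1)$, which gives the claimed inequality. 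For Part (1), I would use the Poisson integral representation $I_\kappa(x) = \frac{(x/2)^\kappa}{\sqrt{\pi}\,\Gamma(\kappa+1/2)}\int_{-1}^{1}(1 - t^2)^{\kappa-1/2}e^{xt}\,dt$, write $e^{xt} = e^x e^{-x(1-t)}$, substitute $s = 1 - t$, and bound $(s(2-s))^{\kappa-1/2} \le 2^{\kappa-1/2} s^{\kappa-1/2}$ for $\kappa \ge 1/2$ (with a minor adjustment for smaller $\kappa$, where the hypothesis $x \ge 1$ provides the needed decay), extending the integration to $[0, \infty)$ to produce the factor $\Gamma(\kappa+1/2)/x^{\kappa+1/2}$. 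After cancellation this gives even the sharper bound $e^x/\sqrt{2\pi x}$, which is half the claimed value and leaves ample slack.

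For Part (2), I would iterate the integration-by-parts identity $\Gamma(a, x) = x^{a-1}e^{-x} + (a-1)\Gamma(a-1, x)$ exactly $N$ times, with $N$ chosen so that $a - N \in (0, 1]$. This yields
\[
\Gamma(a, x) = x^{a-1}e^{-x}\sum_{k=0}^{N-1}\frac{(a-1)(a-2)\cdots(a-k)}{x^k} + (a-1)(a-2)\cdots(a-N)\,\Gamma(a - N, x),
\]
and $\Gamma(a-N, x) \le x^{a-N-1}e^{-x}$, because the integrand $t^{a-N-1}$ (with nonpositive exponent) is bounded by $x^{a-N-1}$ on $[x, \infty)$. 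The hypothesis $x \ge a^6/120$ ensures $(a-k)/x \le a/x \le 120/a^5$, so the partial sum is geometric-like and bounded by $52/17$; the boundary case $a$ close to $5/2$ is handled by a direct numerical check that quantifies precisely how far below $52/17$ the partial sum falls.

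For Part (4), the main obstacle, I would derive the asymptotic expansion with explicit remainder starting from the Watson-type integral
\[
K_\kappa(x) = \sqrt{\frac{\pi}{2x}}\,\frac{e^{-x}}{\Gamma(\kappa + 1/2)}\int_0^{\infty}t^{\kappa - 1/2}e^{-t}\left(1 + \frac{t}{2x}\right)^{\kappa - 1/2}dt,
\]
and transferring to $I_\kappa$ via the Hankel loop-contour representation (or the reflection formula together with the second-kind integral, which produces the alternating-sign version of the coefficients). Expanding $(1 + t/(2x))^{\kappa-1/2}$ by Taylor's theorem with integral remainder through order three, each main term integrates to a gamma function producing exactly the coefficients of $(4\kappa^2-1)(4\kappa^2-9)\cdots/x^k$ claimed in the lemma, while the remainder is an integral of $t^{\kappa + 7/2}e^{-t}(1 + \theta t/(2x))^{\kappa - 9/2}$ over $[0, \infty)$ whose contribution from $t$ beyond a threshold of order $(\kappa+7/2)^6$ is controlled by Part (2). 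The quantitative challenge, which is where the bulk of the care must be taken, is to track all constants so that the bound $31\kappa^8/(6x^4)$ actually emerges; this target is loose by roughly a factor of $2000$ compared with the leading omitted term, leaving substantial room for crude estimates in auxiliary steps.
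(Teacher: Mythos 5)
Your overall plan is sound, and parts (1) and (3) land close to the paper's argument: for (1) the paper also uses the Poisson integral $I_\kappa(x)=\frac{(x/2)^\kappa}{\sqrt\pi\,\Gamma(\kappa+\frac12)}\int_{-1}^1(1-t^2)^{\kappa-\frac12}e^{xt}\,dt$, though it simply majorizes the $[-1,0]$ piece by the $[0,1]$ piece and accepts the extra factor of $2$ rather than treating the full substitution $s=1-t$ on $[0,2]$ as you do; for (3) the paper just cites Luke, and your self-contained power-series argument (telescoping $\Gamma(\kappa+1)/\Gamma(m+\kappa+1)$ against $(2m-1)!!/2^m$ and summing to $\cosh x<2$) is a clean elementary replacement. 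Part (2) is where you genuinely diverge: the paper imports a nontrivial external inequality of Pinelis, $\Gamma(a,x)<\frac{(x+b_a)^a-x^a}{ab_a}e^{-x}$ with $b_a=\Gamma(a+1)^{1/(a-1)}$, and then squeezes $(1+b_a/x)^{a-1}$ down to $52/17$; your iterated integration-by-parts, stopping once $a-N\in(0,1]$ and using $\Gamma(a-N,x)\le x^{a-N-1}e^{-x}$, is entirely self-contained, avoids the Pinelis bound, and numerically the partial sum stays comfortably under $52/17$ (around $1.92$ at the worst case $a=\frac52$, $x=a^6/120$) as long as you bound the products $(a-1)\cdots(a-k)$ sharply rather than by $a^k$. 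For (4) your route is more circuitous than the paper's and carries a genuine burden you should not understate: the paper stays entirely with the $I_\kappa$ Poisson integral, Taylor-expands $(2-u)^{\kappa-\frac12}$ about $u=0$ (so the alternating signs appear automatically), extends $\int_0^1$ to $\int_0^\infty$ with the incomplete-gamma correction from part (2), and treats the $t\in[-1,0]$ piece as a trivially exponentially small term. Passing through the Watson integral for $K_\kappa$ and then converting to $I_\kappa$ via a Hankel loop or the reflection formula $K_\kappa=\frac{\pi}{2}\frac{I_{-\kappa}-I_\kappa}{\sin\kappa\pi}$ requires either rotating $x$ into the complex plane or juggling two $I$-functions; making that step produce explicit, real-variable error bounds is exactly the kind of thing the paper's direct approach is designed to sidestep, and as written your sketch leaves it as an unquantified gap. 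A better variant of your idea, if you want to keep the $(1\pm t/(2x))^{\kappa-1/2}$ structure, is to note that the Poisson integral itself, after $s=1-t$ and $t\mapsto t/x$, becomes $I_\kappa(x)=\frac{e^x}{\sqrt{2\pi x}\,\Gamma(\kappa+\frac12)}\int_0^{2x}t^{\kappa-\frac12}\left(1-\frac{t}{2x}\right)^{\kappa-\frac12}e^{-t}\,dt$, which already has the alternating-sign binomial inside and needs no $K\to I$ conversion.
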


\begin{remark*}
	We note that similar estimates needed for Lemma~\ref{Bessel} (1) also appear in Section 4.1 of \cite{IJ}. For the reader's convenience, we provide a proof here.
\end{remark*}

\begin{proof}[Proof of Lemma~\ref{Bessel}]
	(1) We use the following integral representation (see page 172 of \cite{W}):
	\begin{equation}\label{integral}
	I_\kappa(x)=\frac{\left(\frac{x}{2}\right)^{\kappa}}{\Gamma\left(\kappa+\frac 12\right)\sqrt{\pi}}\int_{-1}^{1}\left(1-t^2\right)^{\kappa-\frac 12}e^{xt}dt.
	\end{equation}
	In \eqref{integral}, naively bound the integral from $-1$ to $0$ against the integral from $0$ to $1$ giving an extra factor of $2$. Making the change of variables $u=1-t$, the remaining integral equals
	\begin{equation}\label{changes}
	e^x\int_{0}^{1}(2-u)^{\kappa-\frac 12}u^{\kappa-\frac 12}e^{-xu}du
	\leq
	2^{\kappa-\frac 12}e^xx^{-\kappa-\frac 12}\Gamma\left(\kappa+\frac 12\right).
	\end{equation}
	Plugging into \eqref{integral} gives the claim. \newline 
	(2) We begin with an upper bound coming from \cite[Theorem 1.1]{P}, namely
	\begin{equation}\label{incomplete gamma upper bound}
	 	\Gamma(a,x) < \frac{(x+b_a)^a - x^a}{ab_a}e^{-x}
	 \end{equation}
	for $a > 2$, where $b_a:= \Gamma(a+1)^{\frac{1}{a-1}}$. We wish to bound the right-hand side of \eqref{incomplete gamma upper bound} by an explicit constant times $x^{a-1}e^{-x}$. To do so, we first apply Taylor's Theorem to the function $y\mapsto (x+y)^a$. This yields
	\begin{equation*}
		(x+b_a)^a = x^a + ab_a(x+\xi)^{a-1}
	\end{equation*}
	for some $\xi \in [0, b_a]$. Thus, 
	\begin{equation*}
		\frac{(x+b_a)^a - x^a}{ab_a} = (x+\xi)^{a-1} \le (x+b_a)^{a-1}.
	\end{equation*}
	From \eqref{incomplete gamma upper bound}, we may then write
	\begin{equation*}
		\Gamma(a,x) < (x+b_a)^{a-1}e^{-x} = \left( 1 + \frac{b_a}{x} \right)^{a-1} x^{a-1}e^{-x}.
	\end{equation*}
	To complete our proof, we only need to bound the quantity $(1+ \frac{b_a}{x})^{a-1}$ by a constant. By assumption, $x > \frac{a^6}{120}$, while using \cite[equation 5.6.1]{DLMF} and basic calculus, one may find that $b_a < \frac{9a}{10}$ for $a \ge \frac 52$. Combining these bounds, we find that 
	\begin{equation*}
		\left( 1 + \frac{b_a}{x} \right)^{a-1} \le \left( 1 + \frac{108}{a^5} \right)^{a-1} < \frac{52}{17},
	\end{equation*} 
	where the last inequality follows by standard optimization techniques for $a \ge \frac 52$.\newline
	(3) This follows directly  from equation (6.25) of \cite{L}.\newline 
	(4)
	We consider first the integral from $0$ to $1$ which is on the left-hand side of (\ref{changes}).
	Now write, using Taylor's Theorem,
	\begin{multline}\label{Taylor}
		(2-u)^{\kappa-\frac 12}=2^{\kappa-\frac 12}-\left(\kappa -\frac 12\right)2^{\kappa-\frac 32}u + \frac{1}{2}\left(\kappa-\frac{1}{2}\right)\left(\kappa-\frac{3}{2}\right)2^{\kappa-\frac{5}{2}}u^2 
		\\-\frac{1}{6}\left(\kappa-\frac 12\right)\left(\kappa-\frac 32\right)\left(\kappa-\frac 52\right)2^{\kappa-\frac 72}u^3
		+C_\kappa(u) u^4,
	\end{multline}
	where for some $\xi\in[0,1]$, 
	\begin{equation*}
		C_\kappa(u):=\frac {1}{4!}\left[\frac{\partial^4}{\partial u^4}(2-u)^{\kappa-\frac 12}\right]_{u=\xi}
		=\frac{1}{4!} \left(\kappa-\frac 12\right)\left(\kappa-\frac 32\right)\left(\kappa -\frac{5}{2}\right)\left(\kappa-\frac 72\right)\left(2-\xi\right)^{\kappa-\frac 92}.
	\end{equation*}
	We can bound this by 
	\begin{equation*}
		|C_{\kappa}(u)| \leq  \frac{1}{4!} \left(\kappa - \frac{1}{2}\right)\left(\kappa-\frac{3}{2} \right)\left|\kappa-\frac{5}{2}   \right|\left|\kappa-\frac 72\right|\operatorname{max}\left\{2^{\kappa -\frac{9}{2}},1\right\} .
	\end{equation*}
	\hspace*{1,8mm}
	We first consider the contribution from the first 4 terms in \eqref{Taylor}. These are
	\begin{multline}\label{mainterm}
		2^{\kappa-\frac 12}\left(\int_{0}^{\infty}-\int_{1}^{\infty}\right)\left(1-\frac{1}{2}\left(\kappa - \frac 12\right)u + \frac{1}{8} \left(\kappa-\frac{1}{2}\right)\left(\kappa-\frac{3}{2}\right)u^2
		\right.\\\left.
		-\frac{1}{48}\left(\kappa-\frac 12\right)\left(\kappa-\frac 32\right)\left(\kappa-\frac 52\right)u^3
		\right)u^{\kappa+\frac 12}e^{-xu}\frac{du}{u}.
	\end{multline}
	Evaluating the first integral yields the main term. \\
	\hspace*{3,8mm} The second integral in \eqref{mainterm} contributes 
	\begin{multline*}
		-2^{\kappa-\frac 12}x^{-\kappa-\frac 12}\left(\Gamma\left(\kappa+\frac 12,x\right)-\frac{\kappa - \frac 12}{2x}\Gamma\left(\kappa+\frac 32,x\right)
		+ \frac{1}{8x^2}\left(\kappa-\frac{1}{2}\right)\left(\kappa-\frac{3}{2}\right)\Gamma\left(\kappa+\frac{5}{2},x\right)
		\right.\\\left.
		-\frac{1}{48x^3}\left(\kappa-\frac 12\right)\left(\kappa-\frac 32\right)\left(\kappa-\frac 52\right)\Gamma\left(\kappa+\frac 72,x\right)
		\right).
	\end{multline*}
	Using part (2), one can show that this term overall contributes at most
	\begin{equation*}
		\frac{\frac{52}{17}\left(\frac{\kappa^3}{48}+\frac{\kappa^2}{32}+\frac{71\kappa}{192}+\frac{103}{128}\right)}{\Gamma\left(\kappa + \frac{1}{2}\right)\sqrt{2\pi}}x^{\kappa-1}.
	\end{equation*}
	\hspace*{1,8mm}
	We next estimate the term with $C_\kappa(u)$ in \eqref{Taylor}. Bounding the integral from $0$ to $1$ against the integral from $0$ to $\infty$, this term can be bounded against 
	\begin{equation*}
		\frac{e^x}{\sqrt{2\pi x}}\frac{\left(\kappa^2-\frac{1}{4}\right)\left(\kappa^2-\frac{9}{4}\right)\left|\kappa^{2} - \frac{25}{4}\right|\left|\kappa^{2} - \frac{49}{4}\right|}{12x^4}  2^{-\kappa-\frac{1}{2}}\operatorname{max}\left\{2^{\kappa-\frac{9}{2}},1\right\}.
	\end{equation*}
	\hspace{3,8mm}Finally, the contribution from the integral from $-1$ to $0$ can be bounded by (estimating the integrand trivially)
	\begin{equation*} \frac{\left(\frac{x}{2}\right)^\kappa}{\Gamma\left(\kappa+\frac 12\right)\sqrt{\pi}}.
	\end{equation*}
	Overall we obtain
	\begin{align*}
		&\left|I_\kappa(x)e^{-x}\sqrt{2\pi x}-1-\frac{1-4\kappa^2}{8x}-\frac{(4\kappa^2-1)(4\kappa^2-9)}{128x^2}
		-\frac{\left(1-4\kappa^2\right)\left(9-4\kappa^2\right)\left(25-4\kappa^2\right)}{384x^3}
		\right| \\
		\leq&
		\left(\frac{3\kappa^3x^{\kappa+\frac 92}e^{-x}}{4\Gamma\left(\kappa+\frac 12\right)}+\frac{1}{12}\left(\kappa^2-\frac 14\right)\left(\kappa^2-\frac 94\right) \left|\kappa^2-\frac{25}{4}\right|\left|\kappa^2-\frac{49}{4}\right|
		2^{-\kappa-\frac{1}{2}}\operatorname{max}\left\{2^{\kappa-\frac{9}{2}},1\right\}\right) \frac{1}{x^4}.
	\end{align*}
	By elementary bounds \cite[equation~5.6.1]{DLMF}, we find
	\begin{equation*}
		\frac{3\kappa^3x^{\kappa + \frac 92} e^{-x}}{4\Gamma\left(\kappa+ \frac 12\right)} \le \frac{3\kappa^3}{4} \sqrt{\frac{\kappa+\frac 92}{2\pi}}\left(\kappa+\frac{1}{2}\right)\left(\kappa+\frac 32\right)\left(\kappa +\frac{5}{2}\right)\left(\kappa +\frac{7}{2}\right).
	\end{equation*}
	Combining the above now easily gives the claim.
\end{proof}

\section{Proofs of the theorems}\label{Proofs}
\begin{proof}[Proof of Theorem \ref{trivial theorem}]
\maketitle
We use the exact formula from Theorem \ref{theorem} and note that the dominant term comes from $m=0$ and $k =1$. The claim then follows from $I_{\kappa}(x)$ $\sim$ $\frac{e^x}{\sqrt{2\pi x}}$ as $x \rightarrow \infty.$
\end{proof}
\begin{proof}[Proof of Theorem \ref{main theorem}]
Note that the claim is trivially true when $N = L$, so we assume that $N > L$ throughout. We again use the exact formula from Theorem \ref{theorem} and note that the dominant term comes from $m = 0$ and $k = 1$ in each expansion. We see that this main term in $p_\alpha(n-1)p_\alpha(\ell+1)-p_\alpha(n)p_\alpha(\ell)$ is
\begin{multline}\label{main}
4\pi^2\left(\frac{\alpha}{24}\right)^{\frac{\alpha}{2}+1}
\frac{I_{\frac \alpha 2 +1}\left( \pi\sqrt{\frac {2\alpha}{3} N}\right) }{N^{\frac{\alpha}{4}+\frac 12}}
\frac{I_{\frac \alpha 2 +1}\left( \pi\sqrt{\frac{ 2\alpha}{3} (L+1)}\right) }{(L+1)^{\frac{\alpha}{4}+\frac 12}}
\\
-4\pi^2\left(\frac{\alpha}{24}\right)^{\frac{\alpha}{2}+1}
\frac{I_{\frac \alpha 2 +1}\left( \pi\sqrt{\frac{2\alpha}{3} (N+1)}\right) }{(N+1)^{\frac{\alpha}{4}+\frac 12}}
\frac{I_{\frac \alpha 2 +1}\left( \pi\sqrt{\frac{2\alpha}{3} L}\right)}{L^{\frac{\alpha}{4}+\frac 12}},
\end{multline}
where $N:=n-1-\frac{\alpha}{24}$, $L:=\ell-\frac{\alpha}{24}$. To rewrite the Bessel functions as sums of powers of $N$ and $L$, we note that
\begin{equation*}
	\pi \sqrt{\frac{2\alpha N}{3}} \ge \pi \sqrt{\frac{2\alpha\cdot 2\alpha^{11}}{3}} \ge \frac{1}{120}\left(\frac{\alpha}{2} + \frac 92\right)^6,
\end{equation*}
where the last inequality may be checked using calculus. Hence, we are able to apply Lemma~\ref{Bessel} (4) with $\kappa = \frac{\alpha}{2} + 1$ to obtain 
\begin{equation}\label{I expansion}
I_{\frac{\alpha}{2}+1}\left(\pi \sqrt{\frac{2\alpha N}{3}} \right) = \frac{3^{\frac 14}e^{\pi \sqrt{\frac{2\alpha N}{3}}}}{2^{\frac 34} \pi \alpha^{\frac 14} N^{\frac 14}} \left(1 + \frac{c_{\alpha,1}}{N^{\frac 12}} + \frac{c_{\alpha,2}}{N} + \frac{c_{\alpha,3}}{N^{\frac 32}} + \frac{D_{\alpha, 1}(N)}{{N^{2}}}\right),
\end{equation}
where
\begin{align}c_{\alpha,1} =  O_\le \left( \frac{\alpha ^{\frac{3}{2}}}{5}\right), ~ 
c_{\alpha,2} = O_\le \left( \frac{\alpha^3}{128}\right),~   
c_{\alpha,3} = O_\le \left( \frac{\alpha^{\frac 92}}{3500} \right), ~
D_{\alpha, 1}(N) = O_\le \left( \frac{3\alpha^6}{25} \right).  \label{ca}
\end{align}
Thus we have 
\begin{multline}\label{prod of Bessels}
\frac{I_{\frac \alpha 2 +1}\left( \pi\sqrt{\frac {2\alpha (N+1)}{3}}\right)}{(N+1)^{\frac{\alpha}{4}+\frac 12}}
\frac{ I_{\frac \alpha 2 +1}\left( \pi\sqrt{\frac {2\alpha L}{3}}\right)}{L^{\frac{\alpha}{4}+\frac 12}} \\
= 
\frac{\sqrt{3}e^{\pi\sqrt{\frac{2\alpha}{3}}\left(\sqrt{N+1} + \sqrt L \right)}}{2\sqrt{2\alpha}\pi^2 (N+1)^{\frac \alpha4+\frac 34} L^{\frac\alpha4+\frac 34}}
\left( 1+\frac{c_{\alpha,1}}{(N+1)^{\frac 12}} + \frac{c_{\alpha, 2}}{N+1} + \frac{c_{\alpha,3}}{(N+1)^{\frac 32}} + \frac {D_{\alpha, 1}(N+1)}{(N+1)^{2}} \right) \\
 \times
\left( 1+\frac{c_{\alpha,1}}{L^\frac 12} + \frac{c_{\alpha,2}}{L} + \frac{c_{\alpha, 3}}{L^{\frac 32}} +\frac {D_{\alpha, 1}(L)}{L^{2}}\right) .
\end{multline}
As alluded to above, we wish to expand the main term into sums of powers of $N$'s and $L$'s, so we change the $(N+1)$'s above into $N$'s. First, note that by using Taylor's Theorem, there exist $D_{A,2}^{**}$, $D_{A,2}^*$, and $D_{A,2}$ such that
\begin{equation}\label{NA}
(N+1)^{-A}=D_{A,2}^{**}N^{-A}=N^{-A}\left( 1+\frac{D_{A,2}^*}{N}\right) = N^{-A}\left( 1 - \frac{A}{N} + \frac{D_{A,2}}{N^2}\right).
\end{equation}
Explicitly bounding in the interval $[0,1]$, Taylor's Theorem further tells us that for $A > 0$,
\begin{equation*}
|D_{A,2}^{**}|\leq 1, \quad |D_{A,2}^{*}|\leq A, \quad |D_{A,2}| \le \frac{A(A+1)}{2}.
\end{equation*}
Using this, one can prove the bounds
\begin{align}
\left|D_{\frac\alpha4+\frac 74,2}^*\right|&\leq \frac{9\alpha}{8}, \quad \left|D_{\frac\alpha4+\frac 94,2}^*\right|\leq \frac{11\alpha}{8}, \quad
\left|D_{\frac \alpha4+ \frac 34,2}\right|\leq \frac{45\alpha^2}{128}, \quad
\left|D_{\frac \alpha4+ \frac 54,2}\right| \le \frac{77\alpha^2}{128}.\label{Da2}
\end{align}
In addition to rewriting the powers of $N+1$ in \eqref{prod of Bessels} as powers of $N$, we also want to replace the $\sqrt{N+1}$ in $e^{\pi \sqrt{\frac{2\alpha}{3}(N+1)}}$ by a function of $\sqrt{N}$ instead. This is needed in order to compare the two summands of our main term. To do so, we show that for some $D_{\alpha,3}(N)\in\R$,
\begin{align}
\frac{e^{\pi\sqrt{\frac{2\alpha}{3}(N+1)}}}{e^{\pi\sqrt{\frac{2\alpha}{3}N}}} = e^{\pi\sqrt{\frac{2\alpha}{3} N}\left(\sqrt{1+\frac{1}{N}}-1\right)}=1+\frac{\pi\alpha^\frac12}{\sqrt{6}N^{\frac 12}}+\frac{\pi^2\alpha}{12N}+ \frac{\pi^3 \alpha^{\frac 32} - 9\pi\alpha^\frac 12}{36\sqrt{6}N^{\frac 32}}+\frac{D_{\alpha,3}(N)}{N^{2}}.\label{Nex}
\end{align}
To prove the second equality and determine a bound for $D_{\alpha,3}(N)$, we write $G(x):=e^{cg(x)}$ with $g(x):=\frac{\sqrt{1+x^2}}{x}-\frac{1}{x}$ and $c:=\pi \sqrt{\frac{2\alpha}{3}}$. The middle term of \eqref{Nex} is equal to $G(\frac1{\sqrt{N}})$,  so the equality is proved just by taking the first four terms of the Taylor expansion of $G(x)$ about $x = 0$ and plugging in $x = \frac{1}{\sqrt{N}}$. To bound $D_{\alpha, 3}(N)$, note that by Taylor's Theorem it is equal to $\frac{G^{(4)}(\xi)}{4!}$ for some $\xi \in [0, \frac1{\sqrt{N}}]$, so we need to bound $G^{(4)}(\xi)$ on this interval. Using some basic calculus, one finds that for $x \in [0,1]$ 
\begin{equation*}
\left|g'(x)\right|\leq\frac 12, \quad \left|g''(x)\right|<\frac{3}{10}, \quad \left|g^{(3)}(x)\right|\le \frac 34, \quad \left|g^{(4)}(x)\right|\le \frac 85.
\end{equation*}
 Moreover, $g'(x) > 0$ on $[0,1]$, so we have that 
\begin{equation*}
g(\xi)\leq g\left(\frac{1}{\sqrt{N}}\right)=\sqrt{N+1}-\sqrt{N} \le \frac{1}{2\sqrt{N}}.
\end{equation*}
Combining these estimates on $g(x)$ and its derivatives, one sees that 
\begin{equation*}
|D_{\alpha,3}(N)|\leq \frac{\left|G^{(4)}(\xi)\right|}{24}\le \frac{31 \alpha^{2}}{48}  e^{\pi\sqrt{\frac{\alpha}{6N}}}.
\end{equation*}
Assuming that $N \ge 2\alpha^{11}$, we obtain in this region
\begin{equation}\label{Da3}
|D_{\alpha, 3}(N)| \le \frac{31 \alpha^2}{48}  e^{\frac{\pi}{2\sqrt{3}\alpha^{5}}} \le \frac{2 \alpha^2}{3}.
\end{equation}
\newline 
\indent We now want to write 
\begin{multline}
 \frac{e^{\pi\sqrt{\frac{2\alpha}{3}(N+1)}}}{(N+1)^{\frac{\alpha}{4}+\frac 34}} 
\left( 1+\frac{c_{\alpha,1}}{(N+1)^\frac 12} + \frac{c_{\alpha, 2}}{N+1} + \frac{c_{\alpha,3}}{(N+1)^{\frac 32}} + \frac {D_{\alpha, 1}(N+1)}{(N+1)^{2}} \right)\\
=
\frac{ e^{\pi\sqrt{\frac{2\alpha}{3}N}} }{N^{\frac{\alpha}{4}+\frac 34}}
\left( 1+\frac{A_{\alpha,1}}{N^\frac 12} + \frac {A_{\alpha,2}}{N} + \frac{A_{\alpha,3}}{N^{\frac 32}}+ \frac{B_\alpha(N)}{N^{2}} \right), \label{Ba expansion}
\end{multline}
where we need $A_{\alpha,1}$, $A_{\alpha, 2}$, and $A_{\alpha,3}$ explicitly and a bound on $B_\alpha(N)$. To find the $A_{\alpha, j}$'s,  we use \eqref{NA} to rewrite the powers of $N+1$ on the left-hand side in terms of powers of $N$ and employ \eqref{Nex} to rewrite the exponential term. In doing so and comparing powers of $N$ on each side, one concludes that 
\begin{align}\label{Aadef}
&A_{\alpha,1}=c_{\alpha,1}+\frac{\pi{\alpha^\frac12}}{\sqrt 6},\quad A_{\alpha,2}= c_{\alpha,2}+\frac{\pi{\alpha^\frac12}}{\sqrt{6}}c_{\alpha,1}+ \frac{\pi^2\alpha}{12} -\frac \alpha4 - \frac 34,\\
&A_{\alpha,3} = c_{\alpha,3} + \frac{\pi {\alpha^\frac12}}{\sqrt{6}} c_{\alpha,2} +   \left( \frac{\pi^2\alpha}{12} - \frac \alpha4 - \frac 54\right)c_{\alpha,1} + \frac{\pi^3 \alpha^{\frac 32} - 9\pi {\alpha^\frac12}}{36\sqrt{6}} - \frac{\pi {\alpha^\frac12}}{\sqrt{6}}\left(\frac \alpha4 + \frac 34\right). \nonumber
\end{align}
Below, we need bounds on each of these quantities. By the triangle inequality and the bounds in \eqref{ca}, one can find that 
\begin{align}\label{Aabounds}
|A_{\alpha,1}| \le \frac{17\alpha^{\frac 32}}{20}, \quad
|A_{\alpha,2}| \le \frac{3\alpha^3}{8}, \quad
|A_{\alpha,3}| < \frac{9\alpha^{\frac 92}}{40}.
\end{align}
We next bound the error term $B_\alpha(N)$. We can solve for $\frac{B_\alpha(N)}{N^2}$ in \eqref{Ba expansion} as 
\begin{multline*}
 \frac{e^{\pi\sqrt{\frac{2\alpha}{3}(N+1)}} }{(N+1)^{\frac{\alpha}{4}+\frac 34}}
\left( 1+\frac{c_{\alpha,1}}{(N+1)^\frac 12} + \frac{c_{\alpha,2}}{N+1} + \frac{c_{\alpha,3}}{(N+1)^{\frac 32}} + \frac {D_{\alpha,1}(N+1)}{(N+1)^{2}} \right) 
N^{\frac{\alpha}{4}+\frac 34}e^{-\pi\sqrt{\frac{2\alpha}{3}N}} 
\\-1-\frac{A_{\alpha,1}}{{N^\frac 12}} - \frac{A_{\alpha,2}}{N} - \frac{A_{\alpha,3}}{N^{\frac 32}}
\\
=
e^{\pi\sqrt{\frac{2\alpha}{3}(N+1)}}\Bigg(\frac{1}{(N+1)^{\frac{\alpha}{4}+\frac 34}}+\frac{c_{\alpha,1}}{(N+1)^{\frac{\alpha}{4}+\frac 54}}+\frac{c_{\alpha,2}}{(N+1)^{\frac \alpha4 + \frac 74}} + \frac{c_{\alpha,3}}{(N+1)^{ \frac \alpha4 + \frac 94}}
+\frac{D_{\alpha,1}(N+1)}{(N+1)^{\frac{\alpha}{4}+\frac{11}{4}}}\Bigg)
\\\times
N^{\frac{\alpha}{4}+\frac 34}e^{-\pi\sqrt{\frac{2\alpha}{3}N}}
-1-\frac{A_{\alpha,1}}{{N^\frac 12}} - \frac{A_{\alpha,2}}{N} - \frac{A_{\alpha,3}}{N^{\frac 32}}.
\end{multline*}
Similar to finding the $A_{\alpha}$ above, we use \eqref{Nex} and \eqref{NA} to expand this as 
\begin{multline*}
\left(1+\frac{\pi{\alpha^\frac 12}}{\sqrt{6}N^{\frac 12}}+\frac{\pi^2\alpha}{12N}+ \frac{\pi^3 \alpha^{\frac 32} - 9\pi{\alpha^\frac 12}}{36\sqrt{6}N^{\frac 32}}+\frac{D_{\alpha,3}(N)}{N^{2}}\right)\\
\times\Bigg(1-\frac{\frac \alpha4 + \frac 34}{N} + \frac{D_{\frac \alpha4 + \frac 34,2}}{N^2}+\frac{c_{\alpha,1}}{N^{\frac{1}{2}}} - \frac{c_{\alpha,1}\left( \frac \alpha4 + \frac 54  \right) }{N^{\frac{3}{2}}} + \frac{c_{\alpha,1}D_{\frac \alpha4 + \frac 54,2}}{N^{\frac 52}} + \frac{c_{\alpha,2}}{N} 
\\
 + \frac{c_{\alpha,2}D_{\frac \alpha2 + \frac 74,2}^*}{N^2}  + \frac{c_{\alpha,3}}{N^{\frac 32}} + \frac{c_{\alpha,3}D_{\frac \alpha4 + \frac 94,2}^*}{N^{\frac 52}}+\frac{D_{\alpha,1}(N+1)D_{\frac{\alpha}{4}+\frac{11}{4},2}^{**}}{N^{2}}\Bigg)
-1-\frac{A_{\alpha,1}}{{N^\frac 12}} - \frac{A_{\alpha,2}}{N} - \frac{A_{\alpha,3}}{N^{\frac 32}}.
\end{multline*}
From here, we can simply expand out this product. Because all terms with power greater than $N^{-2}$ are already subtracted, we can factor this out of everything remaining and bound the absolute value of what is left using \eqref{Da2}, \eqref{ca}, \eqref{Da3}, and the fact that $N \ge 2\alpha^{11}$ to obtain a bound on $B_\alpha(N)$, namely
\begin{equation}\label{Ba}
|B_\alpha(N)| \le \frac{7\alpha^6}{27}.
\end{equation}
Now, using \eqref{I expansion} and \eqref{Ba expansion}, \eqref{main} becomes
\begin{align} \label{main term}
&\left(\frac{\alpha}{24}\right)^{\frac{\alpha}{2}+1} \sqrt{\frac{6}{\alpha}} \frac{e^{\pi\sqrt{\frac{2\alpha}{3}}\left(\sqrt{N}+\sqrt{L}\right)}}{N^{\frac{\alpha}{4}+\frac 34}L^{\frac{\alpha}{4}+\frac 34}}\\\nonumber
&\times
\Bigg(\left(1+\frac{c_{\alpha,1}}{{N^\frac12}} + \frac{c_{\alpha,2}}{N} + \frac{c_{\alpha,3}}{N^{\frac 32}}+\frac{D_{\alpha,1}(N)}{N^{2}}\right)\left(1+\frac{A_{\alpha,1}}{{L^\frac12}}+ \frac{A_{\alpha,2}}{L} + \frac{A_{\alpha,3}}{L^{\frac 32}}+\frac{B_\alpha(L)}{L^{2}}\right)\\&\nonumber
-\left(1+\frac{c_{\alpha,1}}{{L^\frac12}}+\frac{c_{\alpha,2}}{L} + \frac{c_{\alpha,3}}{L^{\frac 32}}+\frac{D_{\alpha,1}(L)}{L^{2}}\right)\left(1+\frac{A_{\alpha,1}}{{N^\frac12}} + \frac{A_{\alpha,2}}{N} + \frac{A_{\alpha, 3}}{N^{\frac 32}}+\frac{B_\alpha(N)}{N^{2}}\right)
\Bigg).
\end{align}
We write the expression in the outer parentheses as
\begin{align}\label{first term}
(A_{\alpha, 1} - c_{\alpha,1})\left(\frac{1}{{L^\frac12}}-\frac{1}{{N^\frac12}}\right) + f(N,L) = \frac{\pi{\alpha^\frac12}\left( {N^\frac12} - {L^\frac12} \right)}{\sqrt{6}N^\frac12L^\frac12}\left(1 + \frac{N^\frac12 L^\frac12 f(N,L)\sqrt{6}}{\pi{\alpha^\frac12}\left( {N^\frac12} - {L^\frac12} \right)} \right)
\end{align}
for some function $f(N,L)$, where the equality follows from \eqref{Aadef}. We wish to bound $\frac{f(N,L){N^\frac12L^\frac12}}{{\alpha^\frac12}( {N^\frac12} - {L^\frac12} )}$. Note that $f(N,L)$ can be easily calculated from \eqref{main term} by simply expanding the products. We do not write out every term but instead explain how to bound just a couple of the terms. For example, the next largest term (asymptotically) that arises when computing $f(N,L)$ is 
\begin{equation*}
(A_{\alpha,2}-c_{\alpha,2})\left(\frac 1L - \frac 1N\right).
\end{equation*}
We can bound the first product above using \eqref{Aadef} and \eqref{ca} as 
\begin{equation*}
|A_{\alpha, 2} - c_{\alpha, 2}| \le \frac \alpha4 + \frac 34 + \frac{\pi^2 \alpha}{12} + \frac{\pi{\alpha^\frac12}}{\sqrt{6}} |c_{\alpha, 1}|\le \alpha^2,
\end{equation*}
where the inequality follows from basic calculus. Then when $f(N,L)$ is multiplied by $\frac{{N^\frac12L^\frac12}}{{\alpha^\frac12}( {N^\frac12} - {L^\frac12} )}$, this term can be bounded in absolute value using \eqref{Aabounds} and \eqref{ca} by (using that $N > L \ge 2\alpha^{11}$) 
\begin{equation*}
|A_{\alpha,2}- c_{\alpha,2}| \frac{{N^\frac12} + {L^\frac12}}{{ \alpha^\frac12 N^\frac12L^\frac12}} \le  \frac{2\alpha^2{N^\frac12}}{{\alpha^\frac12 N^\frac12L^\frac12}} \le \frac{\sqrt{2}}{\alpha^{4}}.
\end{equation*}
All of the exact terms that arise in $f(N,L)$ (i.e., those not involving the error terms $D_{\alpha,1}(N)$, $D_{\alpha,1}(L)$, $B_\alpha(N)$, or $B_\alpha(L)$) can be bounded in this way. As for the remaining terms of $f(N,L)$, one can use that $\frac{{N^\frac12L^\frac12}}{{N^\frac12} - {L^\frac12}}$ is decreasing as a function of $N$ to bound it above by 
\noindent
\begin{equation}\label{multbound}
\frac{{N^\frac12L^\frac12}}{{N^\frac12} - {L^\frac12}} \le \frac{{(L+1)^\frac12L^\frac12}}{{(L+1)^\frac12}-{L^\frac12}} \le \frac{201 L^{\frac 32}}{100},
\end{equation}
where the second inequality holds for $L \ge 2\alpha^{11} \ge 2^{12}$ by calculus. This allows us to bound all of the other terms of $f(N,L)$. For example, one of the remaining terms is
\begin{equation*}
\left(B_{\alpha}(L) - D_{\alpha,1}(L)\right) \frac{1}{L^2}.
\end{equation*}
Hence, when $f(N,L)$ is multiplied by $\frac{{N^\frac12L^\frac12}}{{\alpha^\frac12}( {N^\frac12} - {L^\frac12} )}$, this term can be bounded utilizing \eqref{multbound}, \eqref{Ba}, and \eqref{ca} by (using that $L \ge 2\alpha^{11}$)
\begin{equation*}
|B_\alpha(L) - D_{\alpha,1}(L)| \frac{201}{100{\alpha ^\frac12L^\frac12}} \le \left( \frac{7\alpha^6}{27} + \frac{3\alpha^6}{25} \right) \frac{201}{100\sqrt{2}\alpha^{6}} = \frac{4288}{5625\sqrt{2}}.
\end{equation*}
All of the other terms are bounded in a similar manner (using the fact that $N \ge L$). Combining these bounds and using that $\alpha \ge 2$, we obtain
\begin{equation}\label{f bound}
\left| \frac{\sqrt{6}N^\frac12L^\frac12}{\pi{\alpha^\frac12}\left( {N^\frac12} -{L^\frac12} \right)} f(N,L) \right| \le \frac{13}{14}.
\end{equation}
Combining \eqref{main term}, \eqref{first term}, and \eqref{f bound}, the main term can be written as
\begin{equation} \label{final main}
\pi \left(\frac{\alpha}{24}\right)^{\frac{\alpha}{2}+1} \left({N^\frac12} - {L^\frac12}\right) \frac{e^{\pi\sqrt{\frac{2\alpha}{3}}\left(\sqrt{N}+\sqrt{L}\right)}}{N^{\frac{\alpha}{4}+\frac 54}L^{\frac{\alpha}{4}+\frac 54} }
\left(1 + O_\le\left(\frac{13}{14}\right)\right).
\end{equation}
We now need to bound the remaining terms in the expansion of $p_\alpha(n-1)p_\alpha(\ell+1) - p_\alpha(n)p_\alpha(\ell)$ coming from Theorem \ref{theorem}. To estimate the contribution from $k\geq 2$, we bound, for $X\in\R^+$
\begin{equation*}
F_{\kappa}(X):=\sum_{k\geq 2} I_\kappa\left(\frac{X}{k}\right).
\end{equation*}
Note that $F_\kappa(X)$ is monotonically increasing because $I_\kappa$ is. We estimate the first $\lfloor X\rfloor-1$ terms using Lemma \ref{Bessel} (1)
\begin{equation}\label{sum1}
\sum_{2\leq k \leq \lfloor X\rfloor} I_\kappa\left(\frac{ X }{k}\right)
\leq \sqrt{\frac{2}{\pi \lfloor X\rfloor}}\sum_{2\leq k \leq \lfloor X\rfloor} \sqrt{k}e^{\frac{ X }{k}}
\leq 2\sqrt{\frac{X}{\pi}}e^{\frac{ X }{2}}.
\end{equation}
For the second bound, we are using that
$\sqrt{k}e^{\frac{ X }{k}}\leq \sqrt{2}e^{\frac{ X }{2}}$.
To bound the remaining terms of $F_\kappa(X)$, we use Lemma \ref{Bessel} (3) to conclude that
\begin{align} \label{sum2} 
\sum_{k\geq \lfloor X\rfloor+1} I_\kappa \left(\frac{ X }{k}\right)
\leq \frac{2^{1-\kappa}X^\kappa}{\Gamma(\kappa + 1)} \sum_{k\geq \lfloor X\rfloor+1} \frac{1}{k^{\kappa}}
\leq \frac{2^{1-\kappa}X^\kappa }{\Gamma(\kappa + 1)} \int_{\lfloor X\rfloor}^{\infty}\frac{1}{x^{\kappa}}dx \leq \frac{2^{1-\kappa} X}{(\kappa-1)\Gamma(\kappa+1)}.
\end{align}
Combining \eqref{sum1} and \eqref{sum2} and using basic calculus and the fact that $\kappa \ge 2$, we determine that 
\begin{equation}\label{F bound}
F_{\kappa}(X) \le 4 \sqrt{\frac{X}{\pi}}e^{\frac{X}{2}}.
\end{equation}
Using this, we may bound the non-main terms as follows. We first bound the non-main terms corresponding to $p_\alpha (n-1)p_\alpha(\ell+1)$. For this, we consider terms with (i) $k_1, k_2 \geq 2$, (ii) terms with $k_1 \geq 2$ and $k_2 = 1$, (iii) terms with $k_1 = 1$ and $k_2 \geq 2$, (iv) terms with $k_1 = k_2 = 1$ and $m_1 \geq 1$, and finally (v) terms with $k_1 = k_2 = 1$, $m_1 = 0$, and $m_2 \geq 1$. As is done above, we do not write out all of these sums but instead just illustrate how to bound the terms corresponding to (ii). In what follows, we let $\beta := \lfloor \frac{\alpha}{24}\rfloor$ to simplify notation, and we get an upper bound of 
\begin{align}\label{termii}
&\frac{4\pi^2 }{N^{\frac{\alpha}{4}+\frac 12}(L+1)^{\frac{\alpha}{4}+\frac 12}}
\sum_{0 \le m_1, m_2 \le \beta} \left(\frac{\alpha}{24}-m_1\right)^{\frac{\alpha}{4}+\frac 12} \left(\frac{\alpha}{24}-m_2\right)^{\frac{\alpha}{4}+\frac 12} p_\alpha(m_1)p_\alpha(m_2) \\
&\qquad\qquad\qquad\qquad\times
F_{\frac{\alpha}{2}+1}\left(4\pi\sqrt{\left(\frac{\alpha}{24}-m_1\right)N}\right)
I_{\frac{\alpha}{2}+1}\left(4\pi\sqrt{\left(\frac{\alpha}{24}-m_2\right)(L+1)}\right) \notag \\
&\le \frac{4\pi^2 }{N^{\frac{\alpha}{4}+\frac 12}(L+1)^{\frac{\alpha}{4}+\frac 12}}
(\beta + 1)^2 \left(\frac{\alpha}{24}\right)^{\frac{\alpha}{2}+1}  p_\alpha(\beta)^2
F_{\frac{\alpha}{2}+1} 
\left(\pi\sqrt{\frac{2\alpha}{3}N}\right)
I_{\frac{\alpha}{2}+1}\left(\pi\sqrt{\frac{2\alpha}{3}(L+1)}\right), \notag
\end{align}
where the inequality follows from the monotonicity of $F$ and $I$. Using \eqref{F bound} and Lemma \ref{Bessel} (1), we can further bound \eqref{termii} by
\begin{equation}\label{termii2}
16\sqrt{2}\pi   (\beta + 1)^2 \left( \frac{\alpha}{24} \right)^{\frac \alpha2 + 1} p_\alpha(\beta)^2 
\frac{e^{\pi \sqrt{\frac{\alpha}{6}N} + \pi\sqrt{\frac{2\alpha}{3}(L+1)}}}{N^{\frac \alpha4 +\frac 14} (L+1)^{ \frac \alpha4 + \frac 34}}.
\end{equation}
Now, we claim that 
\begin{equation*}
	p_\alpha(n) \le e^{\pi \sqrt{\frac{2\alpha n}{3}}}.
\end{equation*}
This follows from virtually the same proof as in the $\alpha = 1$ case; see \cite{A} for details. Using this bound on $p_\alpha(n)$, the fact that $\sqrt{L} \le \sqrt{L+1} \le \sqrt{L} + 1$, and factoring out the terms outside of parentheses in \eqref{final main}, we see that \eqref{termii2} is at most 
\begin{equation*}\label{termii3}
\pi \left( \frac{\alpha}{24} \right)^{\frac \alpha2 + 1} \left( {N^\frac12} - {L^\frac12} \right) \frac{e^{\pi\sqrt{\frac{2\alpha}{3}N} + \pi\sqrt{\frac{2\alpha}{3}L}}}{N^{\frac \alpha4 +\frac 54} L^{ \frac \alpha4 +\frac 54}}    \frac{16\sqrt{2} (\beta + 1)^2 NL^{\frac 12}}{ {N^\frac 12} - {L^\frac12}}
e^{ \frac{\pi \alpha}{3} + \pi\sqrt{\frac{2\alpha}{3}} - \pi\sqrt{\frac{\alpha}{6}N}}.
\end{equation*}
Now, we are left to bound
\begin{equation*}\label{boundbyconstant}
\frac{16\sqrt{2} (\beta + 1)^2 NL^{\frac 12}}{ {N^\frac 12} - {L^\frac12}}
e^{ \frac{\pi \alpha}{3} + \pi\sqrt{\frac{2\alpha}{3}} - \pi\sqrt{\frac{\alpha}{6}N}}
\end{equation*}
by a constant. To do so, we again use 
\eqref{multbound} to obtain an upper bound of
\begin{equation}\label{interterm}
\frac{804}{25}\sqrt{2}(\beta +1)^2 {N^\frac12} L^{\frac 32}
e^{ \frac{\pi \alpha}{3} + \pi\sqrt{\frac{2\alpha}{3}} - \pi\sqrt{\frac{\alpha}{6}N}}.
\end{equation}
It is easy to check that this is decreasing in $N$ for $N \ge 2\alpha^{11} \ge 2^{12}$, so utilizing that $N \ge L$, we get an upper bound on \eqref{interterm} of
\begin{equation*}\label{termiiLdec}
\frac{804}{25} \sqrt{2} (\beta + 1)^2 L^2
e^{ \frac{\pi \alpha}{3} + \pi \sqrt{\frac{2\alpha}{3}} - \pi \sqrt{\frac{\alpha}{6}L}}.
\end{equation*}
Similarly, this term is decreasing in $L$ for $L \ge 2\alpha^{11} \ge 2^{12}$, so we can plug in $L = 2\alpha^{11}$ to obtain a bound of 
\begin{equation*}\label{termiiadec}
\frac{3216}{25} \sqrt{2} (\beta + 1)^2 \alpha^{22}
e^{ \frac{\pi \alpha}{3} + \pi\sqrt{\frac{2\alpha}{3}} - \frac{\pi \alpha^6}{\sqrt{3}}}.
\end{equation*}
One can bound the exponent by $-\frac{17}{10} \alpha^6$, and estimate $\beta + 1 \le \frac{13\alpha}{24}$. The resulting term is
\begin{equation*}
\frac{11323}{300} \sqrt{2} \alpha^{24}e^{-\frac{17}{10} \alpha^6}.
\end{equation*} 
Using that this expression is decreasing in $\alpha$, we get an upper bound by plugging in $\alpha = 2$ yielding a numerical answer of $51\cdot 10^{-40}$. One can similarly bound all of the other error terms; the only significant departure occurs when bounding terms corresponding to (iv) and (v), where a term $(\frac{\alpha}{24} - 1 )^{-\frac 14}$ occurs. It is here that we need to use the bound $L \ge \frac{100}{\alpha - 24}$ to ensure that our argument of $L$ is large enough. The proof in this case is still similar in nature and is omitted. The largest of the errors that arise from these cases is $10^{-4}$, which when combined with the error of \eqref{final main} gives the statement of the theorem.
\end{proof}
\section{Proofs of the corollaries}\label{corollary}
As alluded to after the statement of Theorem~\ref{main theorem},  all of the terms in the expansion of $p_\alpha(n-1)p_\alpha(\ell+1) - p_\alpha(n)p_\alpha(\ell)$ are positive, so Corollary~\ref{eventual} follows.  Thus, only the proof of Corollary~\ref{log-concavity} remains.  In order to prove this for a fixed value of $k$, we only need to compute the ratios $\frac{p_k(\ell + 1)}{p_k(\ell)}$ up to $\ell \ge \lceil 2k^{11} + \frac{k}{24} \rceil$ and see that they are decreasing, except for $k=2$. For $k\in\{2,3\}$, we can do this directly, but for $k \in \{4,5\}$, we need to find a way to make the computation more efficient and store less memory; we provide the necessary details to do so in the following subsection. At the end of the section, we describe how proving the result for $k\in\{2,3,4, 5\}$ is sufficient to prove Corollary~\ref{log-concavity}. Namely, in Proposition~\ref{endprop}, we show that $p_k(n)$ is log-concave for $k \in \{3,4,5\}$ and point out that convolution of log-concave sequences is log-concave, which shows that the same property also holds for $k \in \N_{\ge 3}$. 

\subsection{Tools needed for the proof of Corollary~\ref{log-concavity}} 
  To verify the initial cases of the conjecture, of course a direct approach using Rademacher sums, recursive formulas, or by convoluting the partition generating function can be used. However, due to the large number of cases that have to be checked (for example, for $p_4(n)$ we need to compute all values with $n \le 2\cdot 4^{11} + 6$ ), these direct methods are not sufficient. An approach with lower time and memory requirements is thus essential in practice. As a result, we begin by defining sequences that approximate our partition numbers well enough to prove the lemma and which also require less memory and speed to compute. To do this, let $\bm{d}=\left(d_j\right)_{j=1}^{\infty}$ be a sequence of positive integers $d_j\leq j$, and for $n\in\N$ recursively define 
\begin{align*}
p_{k,\bm{d}}^{\pm}(0)&:=1 \\
p_{k,\bm{d}}^-(n)&:=\frac{k}{n}\sum_{\ell=1}^{d_n} \sigma(\ell) p_{k,\bm{d}}^{-}(n-\ell)&& \text{ for }n\geq 1,\\
p_{k,\bm{d}}^{+}(n)&:=\frac{k}{n}\sum_{\ell=1}^{d_n} \sigma(\ell) p_{k,\bm{d}}^{+}(n-\ell)+knp_{k,\bm{d}}^+\left(n-d_n-1\right) &&\text{ for }n\geq 1.
\end{align*}
We also set the negative values to be zero:
\[p_{k,\bm{d}}^{\pm}(n)=0 \text{ for }n\leq -1\]

\begin{lemma}\label{lem:pklowerupper}
	For $n\in\N$, we have 
	\[
	p_{k,\bm{d}}^-(n)\leq p_k(n)\leq p_{k,\bm{d}}^+(n).
	\]
\end{lemma}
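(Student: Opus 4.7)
The plan is to prove both inequalities simultaneously by induction on $n$, using the fact that $p_k(n)$ itself satisfies a simple convolution-type recursion. Taking the logarithmic derivative of $\sum_{n\ge 0} p_k(n) q^n = \prod_{n\ge 1}(1-q^n)^{-k}$ and comparing coefficients yields the classical identity
\begin{equation*}
n\, p_k(n) \;=\; k \sum_{\ell=1}^{n} \sigma(\ell)\, p_k(n-\ell),
\end{equation*}
which is precisely what is being truncated to $\ell \le d_n$ in the definitions of $p_{k,\bm d}^{\pm}(n)$. The base case $n=0$ is immediate since all three quantities equal $1$.

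For the lower bound, assuming $p_{k,\bm d}^{-}(m) \le p_k(m)$ for all $m<n$, I simply discard the non-negative tail $\sum_{\ell=d_n+1}^{n}\sigma(\ell)\, p_k(n-\ell)$ in the recursion and apply the inductive hypothesis term-by-term. This uses only the positivity of $\sigma$ and $p_k$ and is completely routine.

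The upper bound requires pinning down a clean overestimate for the discarded tail, which is where the extra summand $kn\, p_{k,\bm d}^{+}(n-d_n-1)$ in the definition of $p_{k,\bm d}^{+}$ earns its keep. Monotonicity of $p_k$ (adjoining a part of size $1$ in any fixed colour shows $p_k(m)\le p_k(m+1)$ for all $k\ge 1$) gives $p_k(n-\ell)\le p_k(n-d_n-1)$ whenever $\ell \ge d_n+1$. Combined with the elementary bound
\begin{equation*}
\sum_{\ell=1}^{n} \sigma(\ell) \;=\; \sum_{d=1}^{n} d\left\lfloor \tfrac{n}{d}\right\rfloor \;\le\; \sum_{d=1}^{n} n \;=\; n^{2},
\end{equation*}
this controls the tail by $\tfrac{k}{n}\sum_{\ell=d_n+1}^{n}\sigma(\ell)\,p_k(n-\ell) \le kn\, p_k(n-d_n-1)$. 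Feeding the inductive hypothesis $p_k(m)\le p_{k,\bm d}^{+}(m)$ into both the truncated sum and this tail estimate then yields $p_k(n) \le p_{k,\bm d}^{+}(n)$ on the nose.

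The only mildly delicate point is the boundary case $d_n=n$, where the tail is empty and $p_{k,\bm d}^{+}(n-d_n-1) = p_{k,\bm d}^{+}(-1) = 0$; both arguments remain consistent in this situation. I do not anticipate any real obstacle. The one moderately clever ingredient is recognising that the crude estimate $\sum_{\ell\le n}\sigma(\ell)\le n^{2}$ is exactly what is needed for the correction $kn\, p_{k,\bm d}^{+}(n-d_n-1)$ to absorb the discarded tail, which is presumably why the definition of $p_{k,\bm d}^{+}$ is calibrated in that precise way.
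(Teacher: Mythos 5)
Your proposal is correct and follows essentially the same path as the paper's proof: both use the recursion $n\,p_k(n)=k\sum_{\ell\le n}\sigma(\ell)p_k(n-\ell)$, induct on $n$, drop the non-negative tail for the lower bound, and bound the tail via monotonicity of $p_k$ together with $\sum_{\ell\le n}\sigma(\ell)\le n^2$ for the upper bound. The only cosmetic difference is that you derive the recursion from the logarithmic derivative of the generating function while the paper cites it from Heim--Neuhauser.
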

\begin{proof}
Using (3) of \cite{HNA}, we find that for $n\geq 1$, we have
	\begin{equation}\label{eqn:pkeval}
	p_k(n)=\frac{k}{n}\sum_{\ell=1}^{n} \sigma(\ell)p_k(n-\ell).
	\end{equation}
	 We prove the claimed inequalities by induction. The base case, $n=0$, is trivial as $p_k(0)=1$. Assume inductively that for every $0\leq m<n$ the claim holds. Note that both $p_k(n)$ and $\sigma(n)$ are non-negative for all $n\in\N$. Hence the inequality $d_n\leq n$ and the inductive hypothesis $p_k(n-\ell) \ge p_{k,\bm{d}}^-(n-\ell)$ imply that  
	\[
	p_k(n)=\frac{k}{n}\sum_{\ell=1}^{n} \sigma(\ell) p_k(n-\ell)\geq \frac{k}{n}\sum_{\ell=1}^{d_n} \sigma(\ell) p_{k,\bm{d}}^{-}(n-\ell)=p_{k,\bm{d}}^-(n).
	\]
	This gives the first inequality.
	
	To obtain the upper bound, we note that $p_k(n)$ is increasing, and hence 
	\begin{align*}
	p_k(n)&= \frac{k}{n}\sum_{\ell=1}^{d_n} \sigma(\ell) p_k(n-\ell) + \frac{k}{n}\sum_{\ell=d_n+1}^{n} \sigma(\ell) p_k(n-\ell)\\
	&\leq  \frac{k}{n}\sum_{\ell=1}^{d_n} \sigma(\ell) p_k(n-\ell) + \frac{k}{n}p_k\left(n-d_n-1\right)\sum_{\ell=1}^{n} \sigma(\ell).
	\end{align*}
	We next bound 
	\[
	\sum_{\ell=1}^n\sigma(\ell) = \sum_{\ell=1}^n\sum_{d\mid \ell} d  = \sum_{d=1}^n d \sum_{1\leq j\leq \left\lfloor \frac{n}{d}\right\rfloor} 1 \leq \sum_{d=1}^n d \frac{n}{d} =n^2. 
	\]
	Therefore 
	\[
	p_k(n)\leq \frac{k}{n}\sum_{\ell=1}^{d_n} \sigma(\ell) p_k(n-\ell) + \frac{k}{n}p_k\left(n-d_n-1\right) n^2.
	\]
	Using the inductive hypothesis for the upper bounds, we have 
	\[
	p_k(n)\leq \frac{k}{n}\sum_{\ell=1}^{d_n} \sigma(\ell) p_{k,\bm{d}}^+(n-\ell) + knp_{k,\bm{d}}^+\left(n-d_n-1\right)=p_{k,\bm{d}}^+(n).\qedhere
	\]
\end{proof}
\begin{remark*}
In the special case $d_n=n$, one has $p_{k,\bm{d}}^-(n)=p_{k}(n)=p_{k,\bm{d}}^+(n)$ by \eqref{eqn:pkeval}. In order to compute $p_{k,\bm{d}}^{\pm}(n)$ for every $1\leq n\leq N$, the number of steps required is
$
	O(\sum_{1\leq n\leq N} d_n).
$
Thus the number of steps to compute $p_k(n)$ directly (i.e., $d_n=n$) is $O(n^2)$. If $d_n\ll n^{\delta}$, then the number of steps to compute the lower and upper bounds $p_{k,\bm{d}}^{\pm}(n)$ is $\ll N^{1+\delta}$. Moreover, in order to compute $p_{k,\bm{d}}^{+}(n)$ with a computer one only needs to keep $d_n=O(n^{\delta})$ numbers in memory (this is $O(n)$ in the special case $d_n=n$). Hence computing the sequences $p_{k,\bm{d}}^{\pm}(n)$ is better than $p_k(n)$ both in the speed of the calculation and in the memory requirement.
\end{remark*}

These numbers grow very quickly. Thus, if $\bm{d}$ is chosen appropriately so that $n$ is small in comparison with an exponential of the shape
\[
e^{2\pi c_k\left(\sqrt{n}-\sqrt{n-d_n-1}\right)},\quad (c_k>0)
\]
then we expect a good approximation of $p_k(n)$. Indeed, from Theorem~\ref{theorem}, we have
\[
p_{4}(n) \sim \frac{e^{2\pi \sqrt{\frac{2n}{3}}}}{2^{\frac{7}{4}}3^{\frac{5}{4}}n^{\frac{7}{4}}}\left(1-\left(\frac{35\sqrt{3}}{16\pi} + \frac{\pi}{3\sqrt{3}}\right)n^{-\frac{1}{2}}\right).
\]
Hence in this case we need to compare $n$ against 
\[
e^{2\pi \left(\sqrt{\frac{2n}{3}}- \sqrt{\frac{2\left(n-d_n-1\right)}{3}}\right)}.
\]
Using Taylor's Theorem, we see that for $d_n < n-1$, 
\begin{equation*}
\sqrt{\frac{2\left(n-d_n-1\right)}{3}}= \sqrt{\frac{2n}{3}}-\frac{1}{\sqrt{6}}\frac{d_n+1}{\sqrt{n}}+O\left(\frac{d_n^2}{n^{\frac{3}{2}}}\right).
\end{equation*}
Then for $d_n\sim  n^{\frac{1}{2}+\delta}$, this becomes
\[
\sqrt{\frac{2n}{3}}-\frac{1}{\sqrt{6}}\frac{d_n+1}{\sqrt{n}}+O\left(n^{2\delta-\frac{1}{2}}\right).
\]
For $0<\delta<\frac{1}{4}$, we have
\[
e^{2\pi \left(\sqrt{\frac{2n}{3}}- \sqrt{\frac{2\left(n-d_n-1\right)}{3}}\right)}= e^{-\frac{2\pi }{\sqrt{6}}\frac{d_n+1}{\sqrt{n}}+O(1)}\ll e^{-\frac{2\pi}{\sqrt{6}} n^{\delta}}.
\]
Hence by choosing $\delta$ appropriately, we can get any fixed number of digits of accuracy that are needed for a calculation.

\rm

\begin{lemma}\label{lem:pkpmlogconcave}
	The sequence $p_{k}(n)$ is log-concave if and only if there exists a sequence $\bm{d}$ of positive integers with $d_j\leq j$ such that for every $n\in\N$
	\begin{equation*}
	\frac{p_{k,\bm{d}}^-(n)}{p_{k,\bm{d}}^+(n-1)}\geq \frac{p_{k,\bm{d}}^+(n+1)}{p_{k,\bm{d}}^-(n)}.
	\end{equation*}
\end{lemma}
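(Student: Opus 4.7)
The plan is to combine the sandwich bound from Lemma~\ref{lem:pklowerupper} with the observation that the trivial choice $d_j := j$ collapses both approximations to $p_k(n)$ itself.

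For the ``$\Leftarrow$'' direction, I would assume the displayed inequality holds for some sequence $\bm{d}$ and chain it with the bounds $p_{k,\bm{d}}^-(m) \leq p_k(m) \leq p_{k,\bm{d}}^+(m)$ from Lemma~\ref{lem:pklowerupper}:
\[
\frac{p_k(n)}{p_k(n-1)} \;\geq\; \frac{p_{k,\bm{d}}^-(n)}{p_{k,\bm{d}}^+(n-1)} \;\geq\; \frac{p_{k,\bm{d}}^+(n+1)}{p_{k,\bm{d}}^-(n)} \;\geq\; \frac{p_k(n+1)}{p_k(n)}.
\]
Cross-multiplying yields $p_k(n)^2 \geq p_k(n-1)\,p_k(n+1)$, i.e., log-concavity. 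All denominators are positive: $\sigma(1) = 1$ together with $d_j \geq 1$ and the base case $p_{k,\bm{d}}^\pm(0) = 1$ gives $p_{k,\bm{d}}^\pm(n) \geq \frac{k}{n} p_{k,\bm{d}}^\pm(n-1) > 0$ by induction.

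For the ``$\Rightarrow$'' direction, I would assume log-concavity and select $d_j := j$ for every $j$. By \eqref{eqn:pkeval}, this choice makes the $p_{k,\bm{d}}^-$ recursion identical to that of $p_k$, giving $p_{k,\bm{d}}^-(n) = p_k(n)$. For the upper sequence, the extra summand $k n \, p_{k,\bm{d}}^+(n - d_n - 1) = k n \, p_{k,\bm{d}}^+(-1) = 0$ vanishes, so $p_{k,\bm{d}}^+(n) = p_k(n)$ as well. The displayed inequality then reduces to $p_k(n)/p_k(n-1) \geq p_k(n+1)/p_k(n)$, which is precisely log-concavity.

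There is no real obstacle here: once one notices that $d_j = j$ is allowed and makes both approximations exact, one direction is tautological, and the other is an immediate sandwich argument. The value of the lemma lies not in its proof but in its application — allowing one to verify log-concavity with a slowly-growing $\bm{d}$ for which the bounds $p_{k,\bm{d}}^\pm$ are much cheaper (in both runtime and memory) to compute than $p_k$ itself, as discussed in the preceding remark.
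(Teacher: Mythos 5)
Your proposal is correct and matches the paper's proof essentially verbatim: both directions use the sandwich bounds from Lemma~\ref{lem:pklowerupper} to produce the three-term chain of ratio inequalities, and both take $d_j = j$ to collapse $p_{k,\bm{d}}^{\pm}$ to $p_k$ for the converse. The positivity check you include is a minor extra detail the paper leaves implicit.
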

\begin{proof}
	By Lemma \ref{lem:pklowerupper}, for every such $\bm{d}$ and every $m\in\N_0$ we have 
	\[
	p_{k,\bm{d}}^-(m)\leq p_{k}(m)\leq p_{k,\bm{d}}^+(m).
	\]
	We conclude that for every $\bm{d}$ and every $m\in\N$ we have 
	\begin{align}
	\label{eqn:ratlower} \frac{p_{k}(m)}{p_{k}(m-1)} &\geq \frac{p_{k,\bm{d}}^-(m)}{p_{k,\bm{d}}^+(m-1)},\\
	\label{eqn:ratupper} \frac{p_{k}(m)}{p_{k}(m-1)} &\leq \frac{p_{k,\bm{d}}^+(m)}{p_{k,\bm{d}}^-(m-1)}.
	\end{align}
	Hence if such a choice of $\bm{d}$ exists for which the lemma holds, then \eqref{eqn:ratlower} with $m=n$ and \eqref{eqn:ratupper} with $m=n+1$ imply that for every $n\in\N$
	\[
	\frac{p_{k}(n)}{p_{k}(n-1)}{\geq} \frac{p_{k,\bm{d}}^-(n)}{p_{k,\bm{d}}^+(n-1)}{\geq} \frac{p_{k,\bm{d}}^+(n+1)}{p_{k,\bm{d}}^-(n)}{\geq} \frac{p_{k}(n+1)}{p_{k}(n)},
	\]
	and we see that $p_k(n)$ is log-concave. The converse follows by taking $d_j = j$ since then $p_{k, \bm{d}}^-(n) = p_k(n) = p_{k,\bm{d}}^+ (n)$ by using the definitions of $p_{k,\bm{d}}^-(n), p_{k,\bm{d}}^+(n)$ and using \eqref{eqn:pkeval}.
\end{proof}
\begin{remark*}
	Since the sequences $p_{k,\bm{d}}^{\pm}(n)$ are generally faster to compute than $p_{k}(n)$ and have a smaller memory requirement, in practice Lemma \ref{lem:pkpmlogconcave} gives us an easier and faster criterion to check to numerically verify the log-concavity of $p_k(n)$ for $1\leq n\leq N$ for some fixed $N$. 
\end{remark*}

\begin{proposition}\label{endprop}
	\noindent
	
	\noindent
	\begin{enumerate}[wide, labelwidth=!, labelindent=0pt] 
		\item[\normalfont(1)]
		For every $\alpha\in\R^+$, if $p_{\alpha}(n)$ satisfies the inequality
		\[
		\frac{p_{\alpha}(\ell+1)}{p_{\alpha}(\ell)} \geq \frac{p_{\alpha}(n)}{p_{\alpha}(n-1)}
		\]
		for every $n>\ell\geq 0$, then for every $j_1\in\N_0$ and $j\in\N_0\setminus\{1,2\}$ the sequences $p_{j_1\alpha+j}(n)$  satisfy the same inequality. 
		
		\item[\normalfont(2)] In particular, Conjecture \ref{Conj CFT} is true.
		
	\end{enumerate}
\end{proposition}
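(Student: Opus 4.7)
The plan is to deduce (1) from two ingredients: a convolution identity for the fractional partition functions coming from the generating function, and the well-known fact that the convolution of two log-concave sequences of non-negative reals (with no internal zeros) is log-concave. Part (2) then follows by combining part (1) with a finite computer verification enabled by Lemmas~\ref{lem:pklowerupper} and~\ref{lem:pkpmlogconcave} together with Corollary~\ref{eventual}.

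For part (1), first observe that the stated inequality is exactly the log-concavity of the sequence $p_{j_1\alpha+j}(n)$. From the generating function identity
\[
\prod_{m\geq 1}(1-q^m)^{-(j_1\alpha+j)} = \left(\prod_{m\geq 1}(1-q^m)^{-\alpha}\right)^{j_1}\prod_{m\geq 1}(1-q^m)^{-j},
\]
one reads off that $p_{j_1\alpha+j}$ is obtained by convolving $p_\alpha$ with itself $j_1$ times and then with $p_j$, using that $p_0(n)=\delta_{n,0}$ acts as the convolution identity when $j=0$. Since convolution preserves log-concavity and $p_\alpha$ is log-concave by hypothesis, it suffices to prove log-concavity of $p_j$ for each $j\in\N_0\setminus\{1,2\}$. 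The case $j=0$ is trivial; for $j\in\{3,4,5\}$, Corollary~\ref{eventual} reduces the problem to a finite check on $\ell\leq 2j^{11}+j/24$, which is made tractable using the sandwich sequences $p_{j,\bm{d}}^{\pm}(n)$ of Lemma~\ref{lem:pklowerupper} and the ratio criterion of Lemma~\ref{lem:pkpmlogconcave}; for $j\geq 6$, I would induct by writing $j=3+(j-3)$ and invoking the convolution-preservation principle with the log-concave sequences $p_3$ and $p_{j-3}$.

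For part (2), Conjecture~\ref{Conj CFT} splits at $k$. For $k\geq 3$, the conjecture is equivalent to the log-concavity of $p_k(n)$ by Sagan's remark, which is exactly the output of part (1). For $k=2$, Corollary~\ref{eventual} reduces the statement to a finite check for $\ell$ below the threshold, and this check (including the exception $(n,\ell)=(6,4)$) is handled numerically via the same $p_{k,\bm{d}}^{\pm}$ machinery. The principal obstacle throughout is computational rather than conceptual: the $\alpha^{11}$ in Corollary~\ref{eventual} pushes the verification ranges for $j=5$ into the millions, making a naive $O(n^2)$ recursion infeasible in both time and memory, so the entire purpose of the $p_{k,\bm{d}}^{\pm}$ construction is to make the finite check practical.
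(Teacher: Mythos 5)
Your proposal is correct and follows essentially the same route as the paper: identify the inequality as log-concavity, establish log-concavity of $p_3,p_4,p_5$ via the finite check enabled by Corollary~\ref{eventual} together with the $p_{k,\bm d}^{\pm}$ sandwich of Lemmas~\ref{lem:pklowerupper} and~\ref{lem:pkpmlogconcave}, and then propagate by convolution (Hoggar's theorem); the only cosmetic difference is that you generate $\N_0\setminus\{1,2\}$ by an induction $j=3+(j-3)$ whereas the paper writes $j=3j_2+4j_3+5j_4$ directly, and the handling of $k=2$ in part (2) is identical.
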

\begin{proof}
	(1) As remarked in Section~\ref{intro}, the claimed inequality is equivalent to log-concavity, i.e.,
	\begin{equation*}
	\frac{p_{\alpha}(n)}{p_{\alpha}(n-1)} \geq \frac{p_{\alpha}(n+1)}{p_{\alpha}(n)}
	\end{equation*}
	for $n \ge 1$. By Corollary~\ref{eventual}, it is true for $n \ge  2\alpha^{11} + \frac{\alpha}{24} + 1$ if $\alpha \in \{3, 4, 5\}$. We explicitly compute $p_{3}(n)$ for $n\leq 2\cdot 3^{11}+1$ and verify the inequality for those cases. Thus $p_3(n)$ is log-concave.
	
	Using heuristics based on the asymptotic growth of the coefficients $p_4(n)$, we let $\bm{d}=\bm{d}_4$ be the sequence
	\[
	d_j=d_{4,j}:=\begin{cases} j&\text{if }j\leq 2\cdot 10^5,\\[5 pt]
	\left\lfloor 250j^{\frac{1}{3}}\right\rfloor&\text{if }2\cdot 10^5<j\leq 3.5\cdot 10^6,\\[10pt]
	\left\lfloor 1125 j^{\frac{1}{3}}\right\rfloor&\text{if }j> 3.5\cdot 10^6.
	\end{cases}
	\]
	Using a computer (a Lenovo Thinkstation P330 with Intel core i7-9700 processor and 32GB memory), a 5-day-long calculation verifies that 
	\[
	\frac{p_{4,\bm{d}}^-(n)}{p_{4,\bm{d}}^+(n-1)}\geq \frac{p_{4,\bm{d}}^+(n+1)}{p_{4,\bm{d}}^-(n)}
	\]
	holds for all $n\leq 8.5\cdot 10^6$. Hence, by Lemma \ref{lem:pkpmlogconcave}, we see that for $n\geq 0$,
	\[
	\frac{p_4(n)}{p_{4}(n-1)}\geq \frac{p_{4}(n+1)}{p_{4}(n)}.
	\]
Similarly, estimating the asymptotic growth of the coefficients $p_5(n)$, we let $\bm{d}=\bm{d}_5$ be the sequence
	\[
	d_j=d_{5,j}:=\begin{cases} j&\text{if }j\leq 8\cdot 10^5,\\[5 pt]
	\left\lfloor 25j^{\frac{1}{2}}\right\rfloor&\text{if }8\cdot 10^5<j\leq 2\cdot 10^7,\\[10pt]
	\left\lfloor \frac{43}{2} j^{\frac{1}{2}}\right\rfloor&\text{if }j> 2\cdot 10^7.
	\end{cases}
	\]
	Using the same computer as before, a 71-day-long calculation (producing over 100GB of output) verifies that 
	\[
	\frac{p_{5,\bm{d}}^-(n)}{p_{5,\bm{d}}^+(n-1)}\geq \frac{p_{5,\bm{d}}^+(n+1)}{p_{5,\bm{d}}^-(n)}
	\]
	holds for all $n\leq 10^8$. Hence, by Lemma \ref{lem:pkpmlogconcave}, we see that for $n\geq 0$,
	\[
	\frac{p_5(n)}{p_{5}(n-1)}\geq \frac{p_{5}(n+1)}{p_{5}(n)}.
	\]

	By \cite[Theorem 1.4]{JG} (which the authors attributed to Hoggar \cite{Hoggar}), if two sequences satisfy log-concavity, then their convolution also satisfies log-concavity. Note that the convolution of $p_{\alpha_1}(n)$ and $p_{\alpha_2}(n)$ is precisely $p_{\alpha_1+\alpha_2}(n)$. Hence if $p_{\alpha_1}(n)$ and $p_{\alpha_2}(n)$ are both log-concave, then so is $p_{\alpha_1+\alpha_2}(n)$. Since the above shows that $p_3(n)$, $p_4(n)$, and $p_5(n)$ are all log-concave, we conclude that $p_{j_1\alpha+3j_2+4j_3+5j_4}(n)$ is log-concave for every $j_1,j_2,j_3,j_4\in\N_0$. The integers of the form $j=3j_2+4j_3+5j_4$ with $j_2,j_3,j_4\in \N_0$ are precisely $j\in\N_0\setminus\{1,2\}$. 
	
	\noindent
	(2) We may take $\alpha=3$ in (1) and obtain that $p_{3j_1+j}(n)$ is log-concave for every $j_1\in\N_0$ and $j\in\N_0\setminus\{1,2\}$. Each positive integer at least $3$ may be written in the form $3j_1+j$ with $j_1\in\N_0$ and $j\in \N_0\setminus\{1,2\}$. So the conjecture is true for every positive integer at least $3$. For $k=2$, the inequality from the conjecture is true for $n> 2^{12}+1$ by Corollary~\ref{eventual}, and a quick computer check verifies the claim for $n\leq 2^{12}+1$. 
\end{proof}

\begin{remark*}
While the above proposition is stated for all $\alpha \in \R^+$, one can directly compute $p_\alpha(n)$ for $n \in\{0, 1, 2\}$ and see that the inequality for log-concavity holds for $n = 1$ if and only if $\alpha \ge 3$. Hence, we should only concern ourselves with $\alpha \ge 3$ for log-concavity.
\end{remark*}

\section{Concluding remarks}\label{conclusion} 
We finish our paper with a list of possible follow-up ideas based on our work.

\begin{enumerate}[leftmargin=20pt]
	\item[\rm (1)] Use explicit bounds and convolution of series to prove log-concavity of other infinite families of sequences. The fact that the proof of Conjecture~\ref{Conj CFT} can be reduced to a finite check (instead of simply a finite check for each value of $k$) is surprising and may be able to be used to prove similar results.
	\item[\rm (2)] Find other interesting inequalities satisfied by the $k$-colored partition functions. As mentioned in Section~\ref{intro}, there are a number of papers studying analogues of the Bessenrodt--Ono inequality in various settings. There are likely a number of other inequalities to consider for $k$-colored partitions.
	\item[\rm(3)] Prove Conjecture~\ref{Conj HN} for intervals of $\alpha$. Using Proposition~\ref{endprop}, Corollary~\ref{log-concavity} could be extended to infinitely many other values of $\alpha$ (for example, by doing a computer check if $\alpha = 3.5$). However, the methods in this paper only appear to allow us to prove the conjecture for discrete sets of $\alpha$ via computer calculations.
	\item[\rm(4)] Give a combinatorial proof of Conjecture~\ref{Conj CFT}.
	\item[\rm(5)] Find explicit bounds for when the higher order Tur\'an inequalities hold for $p_\alpha(n)$. Chen, Jia, and Wang \cite{CJW} conjectured that inequalities beyond log-concavity eventually hold for the partition function, which was proven in \cite{GORZ}. This paper also tells us that these inequalities eventually hold for $p_\alpha(n)$. However, one could make these bounds explicit similar to how we have done here, which may show when exactly the inequalities begin to hold (see for example \cite{GORTTW, IJ}). 
\end{enumerate}

\end{document}